\newtheorem{thm}{Theorem}[section]
\newtheorem{cor}[thm]{Corollary}
\newtheorem{lem}[thm]{Lemma}
\newtheorem{prp}[thm]{Proposition}
\theoremstyle{definition}
\newtheorem{exa}[thm]{Example}
\newtheorem{rem}[thm]{Remark}
\newcommand{\scr}[1]{\mathscr #1}
\numberwithin{equation}{section}
\def\eins{\boldsymbol 1}
\def\R{\mathbb R}  \def\ff{\frac} \def\ss{\sqrt} \def\B{\mathbf B}
\def\N{\mathbb N}  
 \def\DD{\Delta} \def\vv{\varepsilon} \def\rr{\rho}
\def\<{\langle} \def\>{\rangle}
\def\nn{\nabla} \def\pp{\partial} \def\E{\mathbb E}
\def\d{\text{\rm{d}}}   \def\D{\scr D}
\def\si{\sigma} 
\def\e{\text{\rm{e}}}    
\def\tt{\widetilde}\def\[{\lfloor} \def\]{\rfloor}
 \def\P{\mathbb P} 
\def\C{\scr C}           
  \def\ll{\lambda}
  \def\LL{\Lambda}
\def\to{\rightarrow}\def\gg{\gamma}
\def\EE{\scr E} \def\W{\mathbb W}
\def\B{\scr B}
   \def\supp{{\rm supp}}
\def\1{{\bf 1}}
 \def\beg{\begin} 
\def\beq{\beg{equation}}
\date{}
\begin{document}

\title{{\bf  Diffusion Processes  on p-Wasserstein Space over Banach Space}\footnote{Feng-Yu Wang is supported in part by the National Key R\& D Program of China (No. 2022YFA1006000, 2020YFA0712900), NNSFC (11921001) and
	Deutsche Forschungsgemeinschaft (DFG, German
	Research Foundation) – Project-ID 317210226 – SFB 1283.
	Panpan Ren is supported by NNSFC (12301180) and Research Centre for Nonlinear Analysis at Hong Kong PolyU.  Simon Wittmann is supported by Research Centre for Nonlinear Analysis at Hong Kong PolyU.
The authors thank Professor Tong Yang for his valuable comments, suggestions and support.}}
\author{Panpan Ren$^{(a)}$, Feng-Yu Wang$^{(b)}$ and Simon Wittmann$^{(c)}$\footnote{Corresponding author.}\\
	\footnotesize{$a)$ Department of Mathematics, City University of  Hong Kong, Hong Kong,  China}\\
	\footnotesize{$b)$ Center for Applied Mathematics, Tianjin
		University, Tianjin 300072, China}\\
	\footnotesize{$c)$ Department of Mathematics, The Hong Kong Polytechnic University,  Hong Kong,  China }\\
	\footnotesize{ panparen@cityu.edu.hk,    wangfy@tju.edu.cn, simon.wittmann@polyu.edu.hk}}

\maketitle

\vspace{-0.8cm}

\begin{abstract} 
	To study diffusion processes on the p-Wasserstein space $\mathscr P_p$ for $p\in [1,\infty)$ over a separable, reflexive Banach space $X$, we present a criterion on the quasi-regularity of  Dirichlet forms in $L^2(\mathscr P_p,\Lambda)$ for a reference probability $\Lambda$ on $\mathscr P_p$.  It is formulated in terms of an upper bound condition with   the uniform norm of the intrinsic derivative. We find a versatile class of quasi-regular local Dirichlet forms on $\mathscr P_p$ by using images of Dirichlet forms on the tangent space $L^p(X\to X,\mu_0)$ at a reference point $\mu_0\in\mathscr P_p$. The Ornstein--Uhlenbeck type Dirichlet form and process on $\mathscr P_2$ are an important example in this class. We derive an $L^2$-estimate for the corresponding heat kernel and an integration by parts formula for the invariant measure.
\end{abstract}

\noindent 2020 Mathematics subject classification: 60J60, 60J25, 60J46.\\
\noindent Keywords: Dirichlet forms, Quasi-regularity, Diffusion process, Wasserstein space.

\section{Introduction}\label{sec:const}

\noindent As a crucial topic in the crossed field of  probability theory, optimal transport and partial differential equations,  stochastic analysis on the Wasserstein space
has received much attention. Some   measure-valued  diffusion processes  have been constructed by using the theory of Dirichlet forms, see  \cite{KLV, ORS, Sch, Shao, Sturm, Sturm24} and references therein.
The pre-Dirichlet forms are defined  by integrating a square field operator with respect to a reference Borel probability measure $\LL$ on a topological space, whose points are measures over a Riemannian manifold, or $\R^d$. The square field operators are determined  by the intrinsic or extrinsic derivatives, which describe  the stochastic motion and birth-death of particles respectively.
In order to establish the integration by parts formula  ensuring the closability of the pre-Dirichlet form,    the selection of reference measures $\LL$ found in the literature  are typically supported on the class of singular measures. Hence, these do not provide natural options when looking for a suitable substitute for a volume measure or a Gaussian measure on the set of probability measures.
On the other hand, for stochastic analysis on the Wasserstein space, it is essential to construct a diffusion process which plays a role of Brownian motion in finite-dimensions,
or the Ornstein--Uhlenbeck (O-U for short) process on a separable Hilbert space.
This has been a long standing open problem due to the lack of a volume or Gaussian measure on such a state space, which could serve as an invariant measure. 
As a solution to this problem, \cite{RW22} presents a general technique to construct an abundance of Gaussian-like  probability measures on $\scr P_2(\R^d)$ together with the related O-U type Dirichlet forms.  The construction  is very natural as the Gaussian measure and the related Dirichlet form are obtained as images of the corresponding objects from the tangent space $T_{\mu_0,2}:=L^2(\R^d\to\R^d,\mu_0)$ at a fixed element $\mu_0\in \scr P_2$ which is absolutely continuous with respect to the Lebesgue measure on $\R^d$.
The results of this article ensure the existence of a versatile class of diffusion processes on $\scr P_p$, $p\in[1,2]$, whose Dirichlet forms are of gradient type. 
The OU type process on $\scr P_2$ is an important example. We derive an $L^2$-estimate for the corresponding heat kernel and an integration by parts formula for the Gaussian-like measure on $\scr P_2$.

The main idea behind the construction of an OU type process on $\scr P_2$ is based on the following fact  from the theory of optimal transport, which can be found in \cite{V09} or \cite{AGS05}, for example.
The set $\scr P_2$ coincides with the image set of 
$$\Psi: T_{\mu_0,2} \ni h\mapsto \mu_0\circ h^{-1}\in\scr P_2.$$
The map $\Psi$ is $1$-Lipschitz continuous with respect to
the $2$-Wasserstein distance
$$\W_2(\mu,\nu):=\inf_{\pi\in \C(\mu,\nu)} \bigg(\int_{\R^d\times \R^d} |x-y|^2\,\d\pi(x,y)\bigg)^{\ff 1 2},\quad\mu,\nu\in \scr P_2,$$
where $\C(\mu,\nu)$ denotes the set of all couplings of $\mu$ and $\nu$.
Now, let $G$ be a non-degenerate Gaussian measure on the Hilbert space $T_{\mu_0,2}$ with trace-class
covariance operator $A^{-1}$, where $(A,\D(A))$ is a positive definite self-adjoint operator in $T_{\mu_0,2}$. The associated   O-U  process on $T_{\mu_0,2}$ is generated by
\begin{equation}\label{eqn:OUgen}
	L^{\text{OU}}u(h):=\Delta u(h)-\langle A\nabla u(h),h\rangle_{T_{\mu_0,2}},\quad h\in T_{\mu_0,2},\, u\in \D(L^{\text{OU}})\subseteq L^2(T_{\mu_0,2},G).
\end{equation} Here, $\nn$ and $\DD$ denote the gradient and Laplacian on  $T_{\mu_0,2}$ respectively.
The O-U process $(X_t)_{t\ge 0}$ on the tangent space can be constructed as the mild solution of the corresponding semi-linear SPDE, i.e.
$$X_t= \e^{-At} X_0+\ss 2 \int_0^t\e^{-(t-s)A}\d W_s,\ \ t\ge 0,$$
where $W_t$ is the standard cylindrical Brownian motion on $T_{\mu_0,2}$ (see e.g.~\cite[Chap.~6]{DZ}).
The associated O-U Dirichlet form $(\tt\EE,\D(\tt\EE))$ is the closure of
$$\tt\EE(f,g):= \int_{T_{\mu_0,2}}\<\nn f(\phi),\nn g(\phi)\>_{T_{\mu_0,2}}\d G(\phi),\qquad f,g\in C_b^1(T_{\mu_0,2}).$$
Now, under the map $\Psi$, the image of the Gaussian measure $G$ gives a reference measure
$$N_G:= G\circ\Psi^{-1}$$ on $\scr P_2$, which is   called
the Gaussian measure   induced by $G$.
The $\Psi$-image $(\EE,\D(\EE))$ of   $(\tt\EE,\D(\tt\EE))$ is a symmetric conservative local Dirichlet form in $L^2(\scr P_2,N_G)$
satisfying
\beq\label{WP0}
\EE(u,v)=\int_{\scr P_2} \<Du(\mu), Dv(\mu)\>_{L^2(\R^d\to\R^d,\mu)}\d N_G(\mu),\qquad u,v\in C_b^1(\scr P_2),
\end{equation} where $D$ is the intrinsic derivative on $\scr P_2$, which is first introduced in \cite{AKR}
on the configuration space over Riemannian manifolds, and  see \cite{BRW} or Definition \ref{D} below for  the class $C_b^1(\scr P_p), p\ge 1.$
The form $\EE$ in \eqref{WP0} has the same type  as the O-U Dirichlet form on a Hilbert space. Moreover, as shown in \cite{RW22},  it inherits  several nice properties  from $ \tt\EE$,
including the log-Sobolev inequality and compactness of its semigroup. 
However, the question of quasi-regularity and hence the existence of a Markov process on $\scr P_2$ associated with $\EE$ is still open up to now.

It is worth pointing out that the choice of $\mu_0$ in this regard dose not matter. 
Since the set
\begin{equation*}
\big\{N_G=G\circ\Psi^{-1}:G\text{ is a Gaussian on } T_{\mu_0,2}\big\}
\end{equation*}
is invariant under change of reference point $\mu_0\in \scr P_2$, as long as $\mu_0$ is absolutely continuous w.r.t.~the Lebesgue measure (see Remark \ref{rem:inv} below), the family of Gaussian-like measures and corresponding O-U type processes on $\scr P_2$ do not depend on $\mu_0$.
The generator of $\EE$  can be formally represented as the intrinsic Laplacian with a drift (see \cite{RW22}) and the reference measure $N_G$ satisfies an integration by parts formula (see Sect \ref{sec:ibpf} below) in
analogy to the corresponding formula for the Gaussian $G$.
So, we refer to  $(\EE,\D(\EE))$ as O-U type Dirichlet form in $L^2(\scr P_2,N_G)$. 

Since quasi-regularity is the key to construct Markov processes using  Dirichlet forms, the existence of the corresponding OU type stochastic process is still an open problem.
As such a process is of wide interest, we prove a handy, general criterion (see Theorem \ref{QRT} below) for the quasi-regularity of Dirichlet forms on the Wasserstein space over a Banach space $X$, 
which in particular applies to the above example, but also in  more general situations.
The framework in this article allows to construct diffusion processes on the
$p$-Wasserstein space $\scr P_p$ over a separable, reflexive Banach space for $p\in [1,\infty)$. Application to the O-U type process for $p=2$ in case $X$ is a Hilbert space  serve as a typical and highly relevant example. 
The proof of quasi-regularity is inspired by  the methods developed in \cite{RS92} and \cite{RS95}.
The latter of these two only presumes a Polish state space.
Nevertheless, application of the techniques and verification of conditions require a detailed analysis which takes into account the special nature and properties of the metric respectively topology involved. 
An adaptation  to the $p$-Wasserstein distance, as realized in this article, is completely new.
For the configuration space equipped with the vague topology, a similar result has been achieved in \cite{MR00}.
Regarding the weak topology on the set of Borel probability measures over a Polish space, \cite{ORS} provides a quasi-regularity result. The latter, however, focuses on Dirichlet forms linked to the extrinsic derivative instead of the intrinsic.  Our main results, Theorems \ref{QRT} \& \ref{TN}, relating to the intrinsic derivative and the $p$-Wasserstein distance,
show quasi-regularity for a wide class of Dirichlet forms with state space $\scr P_p$.
The methods of this survey should also be applicable for the Wasserstein space over non-linear metric spaces like Riemannian manifolds. To save space  we leave this for a future study.

Throughout this text, let $(X,\|\cdot\|_X)$ be a separable, reflexive Banach space and $\scr P$ be the space of probability measures on $X$. For fixed $p\in [1,\infty)$,  we consider the $p$-Wasserstein space
\beq\label{PP}\scr P_p:=\big\{\mu\in \scr P:\ \mu(\|\cdot\|_X^p)<\infty\big\}.\end{equation}
As stated in \cite[Thm.~6.18]{V09}, the $p$-Wasserstein distance
\beq\label{WP}\W_p(\mu,\nu):=\inf_{\pi\in \C(\mu,\nu)} \bigg(\int_{X\times X} \|x-y\|_X^p\,\d\pi(x,y)\bigg)^{\ff 1 p},\quad\mu,\nu\in \scr P_p,\end{equation} 
yields a complete, separable metric on $\scr P_p$. 
It is  worth mentioning, that the metric space $(\scr P_p,\W_p)$ is not locally compact, not even  in case $X=\R^d$. 
The subsequent list sums up our main results.
\begin{itemize}
	\item We provide a general quasi-regularity condition for a Dirichlet form $(\EE,\D(\EE))$ in \sloppy $L^2(\scr P_p,\LL)$ for a given reference probability $\LL$ on $\scr P_p$:
	If $\D(\EE)$ has a dense subset of quasi-continuous functions and moreover for all differentiable functions $f:\scr P_p\to\R$ of cylindrical type
	satisfies the  inequality
	\begin{equation*}
		\EE(f,f)\leq C\sup_{\mu\in\scr P_p}\|Df(\mu)\|^2_{L^{p^*}(X\to X^*,\mu)}
	\end{equation*}
	for some constant $C\in(0,\infty)$ and $p^*:=\ff p{p-1}\in[1,\infty]$, then $(\EE,\D(\EE))$ is quasi-regular on $\scr P_p$.
	\item Fixing $\mu_0\in\scr P_p$ together with a probability $\LL_0$  on $T_{\mu_0,p}:=L^p(X\to X,\mu_0)$
	 and a Dirichlet form $(\tt\EE,\D(\tt\EE))$ in $L^2(T_{\mu_0,p},\LL_0)$ we obtain a suitable measure $\LL$ on $\scr P_p$
	 and quasi-regular $(\EE,\D(\EE))$ via the push-forward under the map
	 \begin{equation*}
	 	\Psi:T_{\mu_0,p}\ni\phi\mapsto \mu_0\circ\phi^{-1}\in \scr P_p.
	 \end{equation*}
	 In case $p\in[1,2]$ and $X$ is a Hilbert space: If $\tt\EE$ is a diffusion form, then $\EE$ is as well, i.e.~a local Dirichlet form with square-field operator
	 \begin{equation*}
	 	\Gamma(u,v)(\mu)=\langle D u(\mu),B_\mu( D v(\mu))\rangle_{L^2(X\to X,\mu)},
	 \end{equation*}
	 where $B_\mu:T_{\mu,2}\to T_{\mu,2}$ is a field over $\mu\in\scr P_p$ of bounded, symmetric operators,
	 and $\EE$ is associated to a conservative diffusion process on $\scr P_p$.
	\item For the choices  $p=2$ and $\LL_0:=G$ as a non-degenerate Gaussian on $L^2(X\to X,\mu_0)$, we 
	obtain a Markov process on $\scr P_2$ which has invariant measure $N_G:=G\circ\Psi^{-1}$ and satisfies $L^2$-bounds for its transition kernel ${( p_t)}_{t\ge 0}$ in terms of the eigenvalues
	${\{\alpha_n\}}_{n\ge 1}$ of the covariance operator of $G$,
	\beg{equation*} \int_{\scr P_2\times \scr P_2} p_t(\mu,\nu)^2 \,\d N_G(\mu) \,\d N_G(\nu)
	\le  \prod_{n\in \mathbb N}  \Big(1+ \ff{2\e^{-2\alpha_n t}}{(2\alpha_n t)\land 1}\Big)<\infty,\ \ t>0.\end{equation*}
	Moreover, an integration by parts formula for $N_G$ is obtained.
\end{itemize}

The outline of this article is as follows:
In Section \ref{sec:QR},  we address the property of quasi-regularity for Dirichlet forms in   $L^2(\scr P_p,\LL)$ and develop mild criterion which involves the uniform norm of the intrinsic derivative. In Section \ref{sec:gradForm}, we apply this  criterion   to  construct  a class of quasi-regular local Dirichlet forms and diffusion processes. These are obtained as images of Dirichlet forms on the tangent space at a fixed point of the Wasserstein space. In Section \ref{sec:applic} we confirm the quasi-regularity of the O-U type Dirichlet form and  give an $L^2$-estimate for its heat kernel as well as an integration by parts formula for its invariant measure.

\section{Quasi-regular Dirichlet forms on \texorpdfstring{$\scr P_p$}{Pp}}\label{sec:QR}

\noindent We first recall some notions on Dirichlet forms which can be found in  \cite{MR92}.

Let $(E,\rr)$ be a Polish space   and $\LL$ be a probability measure on the Borel $\si$-algebra $\scr B(E)$.
A Dirichlet form  $(\EE,\D(\EE))$ on $L^2(E,\LL)$ is a densely defined, closed bilinear form, which is Markovian, see for instance \cite[Chapt.~I]{MR92}.
As a convention in this text, to have a simpler and more convenient notation, a measurable function $f$ on $(E,\scr B(E))$ is identified with its $\LL$-equivalence class of measurable functions.
For example, we write $f\in L^2(E,\LL)$ or $f\in\D(\EE)$ if its class has the respective property.
In this article, only Dirichlet forms which are symmetric and conservative, i.e.~$\eins_E\in\D(\EE)$ with $\EE(\eins_E,\eins_E)=0$, play a role.
So, by convention, if we refer to a generic Dirichlet form in an abstract context, then symmetry and conservativeness are always assumed. 
We denote by $\LL(f)$ the integral of a function $f$ with respect to the measure $\LL$ and set
$$\EE_1(f,g):=\Lambda(fg)+\EE(f,g),\quad f,g\in\D(\EE).$$ 
For an open set $O\subseteq E$ the $1$-Capacity associated to $\EE$ is defined as
\begin{equation*}
{\rm Cap}_1(O):=\inf \big\{\EE_1 (f,f)\,:\, f\in \D(\EE),\, f(z)\ge 1\text{ for }\LL\text{-a.e.~}z\in O\big\}
\end{equation*}
with the convention of $\inf(\emptyset):=\infty$. For an arbitrary set $A\subseteq E$, let
$${\rm Cap}_1(A):= \inf \big\{{\rm Cap}_1(O)\,:\, A\subseteq O,\, O \, \text{is\ an open set in }E\big\}.   $$

An $\EE$-nest  (or nest for short) is a sequence of closed subsets $\{K_n\}_{n\in\N}$ of $E$ such that
$$\lim_{n\to\infty}{\rm Cap}_1(E\setminus K_n)= 0.$$

A measurable function $f: E\to\R$ is called quasi-continuous,  if there exists a nest $\{K_n\}_{n\in\N}$ such that the restriction $f|_{K_n}$ is continuous for each $n\in\N$.
A sequence $\{f_k\}_{k\in\N}$ of measurable functions   is said to converge quasi-uniformly to a function $f:E\to\R$,  if there exists a nest $\{K_n\}_{n\in\N}$ such that the sequence of restricted functions $f_k|_{K_n}$, $k\in\N$, converge to $f|_{K_n}$ uniformly on $K_n$ as $k\to\infty$ for each $n\in\N$.
If a property, which an element $z\in E$ either has or doesn't, holds for all $z$ in the complement of a set $N\subseteq E$ with $\textnormal{Cap}_1(N)=0$, then this property is said to hold quasi-everywhere (q.-e.) on $E$.

The following Definition is equivalent to \cite[Def.~IV.3.1]{MR92} in the conservative case.
\beg{defn}\label{DF2} The  Dirichlet form $(\EE,\D(\EE))$    is called quasi-regular, if the following three conditions are met.
\beg{enumerate}
\item[1)]  There exists an  $\EE$-nest of compact sets (i.e.~$\textnormal{Cap}_1(\,\cdot\,)$ is tight).
\item[2)]  The Hilbert space  $\D(\EE)$ has a dense subspace consisting of quasi-continuous functions.
\item[3)] There exists $N\subseteq E$ with $\textnormal{Cap}_1(N)=0$  and a
sequence $\{f_i\}_{i\ge 1} \subseteq \D(\EE)$ of   quasi-continuous functions  which
separate points in $E\setminus N$, i.e.~for any two different points $z_1,z_2\in E\setminus N$ there exists $i\in\N$ such that $f_i(z_1)\neq f_i(z_2)$.
\end{enumerate}\end{defn}

\subsection{A criterion of the quasi-regularity}

\noindent From now on, we consider $(E,\rr)=  (\scr P_p,\W_p)$ given in \eqref{PP} and \eqref{WP} for some $p\in [1,\infty)$.  To prove the quasi-regularity of a Dirichlet form $(\EE,\D(\EE))$ in $L^2(\scr P_p,\LL)$,  we look at the class   $C_b^1(\scr P_p)$ defined as follows by using the intrinsic derivative. This derivative   is first introduced in \cite{AKR} on the configuration space over a Riemannian manifold, and has been extended in \cite{BRW} to $\scr P_p$ over a Banach space.

Let $X^*$ be the dual space of $X$, i.e.~$X^*$ is the Banach space of bounded linear functionals $X\to\R$. We write
$$\phantom{}_{X^*}\<x',x\>_X:= x'(x),\ \ x'\in X^*,\ x\in  X.$$
Let $p^*=\ff p{p-1}$ which is $\infty$ if $p=1$. The   tangent space of $\scr P_p$ at a point $\mu\in \scr P_p$ is
defined as $$T_{\mu,p}:=L^p(X\to X,\mu).$$
By virtue of \cite[Thm.~IV 1.1 \& Cor.~III 3.4]{Diestel}, its dual space $T_{\mu,p}^*$ can be identified with the space
$L^{p^*}(X\to X^*,\mu)$. Accordingly, we write
$$\phantom{}_{T_{\mu,p}^*}\<\phi',\phi\>_{T_{\mu,p}}:=\int_{X} \phantom{}_{X^*}\<\phi'(x),\phi(x)\>_X\,\mu(\d x),\ \ \phi'\in T_{\mu,p}^*,\ \phi\in T_{\mu,p}.$$

Let $id$ denote the identity function $\R^d\to\R^d$.
\beg{defn}\label{D} Let $f$ be a continuous function on $\scr P_p$.
\begin{itemize}
\item $f$ is called intrinsically differentiable,
if for every $\mu\in \scr P_p$,
$$T_{\mu,p}\ni \phi\mapsto D_\phi f(\mu):=\lim_{\vv\to 0}\ff{f(\mu\circ(id+\vv\phi)^{-1})-f(\mu)}\vv $$
is a bounded linear functional, and the intrinsic derivative of $f$ at $\mu$ is defined as the unique element $Df(\mu)\in T_{\mu,p}^* $ such that
$$D_\phi f(\mu)=\phantom{}_{T_{\mu,p}^*}\<Df(\mu), \phi\>_{T_{\mu,p}}:={\int_X}\phantom{}_{X^*}\<Df(\mu)(x), \phi(x)\>_X\,\mu(\d x),\ \ \phi\in T_{\mu,p}.$$
\item We denote $f\in C^1(\scr P_p),$ if $f$ is intrinsically differentiable such that
$$\lim_{\|\phi\|_{T_{\mu,p}}\downarrow 0}\ff{|f(\mu\circ(id+\phi)^{-1})-f(\mu)-D_\phi f(\mu)|}{\|\phi\|_{T_{\mu,p}}}=0,\ \  \mu\in \scr P_p,$$
and  $Df(\mu)(x)$ has a continuous version in $(\mu,x)\in \scr P_p\times X,$ i.e. there exists a continuous map $g:\scr P_p\times X\to X^*$ such that $g(\mu,\,\cdot\,)$ is a $\mu$-version of $Df(\mu)$ for each $\mu\in\scr P_p$. In this case, we always take $Df$ to be  its continuous version, which is unique.
\item We write $f\in C^1_b(\scr P_p),$ if $f\in C^1(\scr P_p)$ and   $|f|+\|Df\|_{X^*}$ is bounded on $\scr P_p\times X.$\end{itemize}
\end{defn}

A typical subspace of  $C_b^1(\scr P_p)$ is the class of   cylindrical functions   introduced as follows.
First, we recall the continuously differentiable cylindrical functions on the Banach space $X$:
$$\scr F C_b^1(X) := \big\{  g(x_1',\cdots, x_n'):\,\ n\in\N,\, x_i'\in X^*,\, g\in C_b^1(\R^n)\big\},$$
where  each $x_i': X\to \R$ is a bounded linear functional.
It is well known that each
$$\psi:= g(x_1',\cdots, x_n') \in \scr F C_b^1(X) $$ belongs to $C_b^1(X)$, since $\psi$ is bounded and Fr\'echet differentiable on $X$ with bounded and continuous derivative
\begin{equation*}
\nabla \psi(x)= \sum_{i=1}^n (\pp_i g) \big( \phantom{}_{X^*}{\langle}x_1',x{\rangle}_X,\cdots, \phantom{}_{X^*}{\langle}x_n',x{\rangle}_X\big)x_i',\quad \ x\in X.
\end{equation*}

Next, we consider the space of continuously differentiable cylindrical functions:
\beq\label{CF}\scr F C_b^1(\scr P) := \big\{\scr P\ni \mu\mapsto g(\mu(\psi_1),\cdots, \mu(\psi_n))\,: \,\ n\in\N,\, \psi_i\in \scr FC_b^1(X),\, g\in C_b^1(\R^n)\big\}.\end{equation}
It is clear that  for a function $f\in \scr F C_b^1(\scr P)$ with  $f(\mu):=g(\mu(\psi_1),\cdots, \mu(\psi_n))$ the map $\scr P_p\ni \mu\mapsto f(\mu)\in\R$,  is an element of $C_b^1(\scr P_p)$ with
\begin{equation}\label{eqn:Df}
D f(\mu)(x)= \sum_{i=1}^n (\pp_i g) (\mu(\psi_1),\cdots, \mu(\psi_n)) \nn \psi_i(x)\in X^*,
\quad  (\mu,x)\in\scr P_p\times X.
\end{equation}
In the following, we  write $\scr F C_b^1(\scr P_p)$  for the restrictions of  functions in  $\scr F C_b^1(\scr P)$   to $\scr P_p$.
We will see that for any probability measure $\LL$ on $\scr P_p$, $\scr F C_b^1(\scr P_p)$ is dense in $L^2(\scr P_p,\LL)$ (Lemma \ref{L2} below).

\beg{thm}\label{QRT} A Dirichlet form   $(\EE,\D(\EE))$    in $L^2(\scr P_p,\LL)$ is quasi-regular if it satisfies the following two conditions:
\begin{itemize}
\item[$(C_1)$]   $ \scr FC_b^1(\scr P_p)\subseteq \D(\EE)$  and there exists a constant $C\in (0,\infty)$ such that
\begin{equation*}
\EE(f,f)\leq C\sup_{\mu\in\scr P_p}\|Df(\mu)\|^2_{T_{\mu,p}^*},\ \ f\in \scr FC_b^1(\scr P_p).
\end{equation*}
\item[$(C_2)$] The Hilbert space  $\D(\EE)$ has a dense subspace consisting of  quasi-continuous functions.
\end{itemize}
\end{thm}

We would like to indicate that the conditions in Theorem \ref{QRT} are easy to check in applications.
When the Dirichlet form $(\EE,\D(\EE))$ is  constructed as the closure (i.e. smallest closed extension) of a   bilinear form defined on  $C_b^1(\scr P_p)$ or $ \scr FC_b^1(\scr P_p)$,  the second condition holds automatically, and  the first condition holds if    there exists a positive function $F\in L^1(\scr P_p,\LL)$ such that
$$\EE(f,f)\le \int_{\scr P_p} F(\mu) \|Df(\mu)\|^2_{T_{\mu,p}^*}\LL(\d\mu),\ \ f\in \scr FC_b^1(\scr P_p).$$
For example, $F(\cdot)$ may be a dominating function for some diffusion coefficient, see Section \ref{diffusion} below.

\subsection{Proof of the criterion}\label{sec:prelimi}

\noindent We first present some lemmas.

\begin{lem}\label{lem:onBorel}
There exists  a sequence $\{\psi_n\}_{n\ge 1}\subseteq  \scr FC_b^1(X)$ with the following properties.
\begin{enumerate}
\item[$(i)$]  The family $ \{\scr P(X)\ni \mu\mapsto\mu(\psi_n)\}_{n\in\N} $ separates the points on $\scr P$, and hence  separates the points on $\scr P_p$.
\item[$(ii)$] The $\sigma$-algebra   generated by the family $ \{\scr P(X)\ni \mu\mapsto\mu(\psi_n)\}_{n\in\N} $ coincides with the Borel $\sigma$-algebra $\scr B(\scr P_p)$.
\end{enumerate}
\end{lem}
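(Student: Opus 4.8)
\textbf{Proof proposal for Lemma \ref{lem:onBorel}.}

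The plan is to construct the sequence $\{\psi_n\}$ explicitly by combining a countable dense set in $X^*$ (available since $X$ is separable and reflexive, hence $X^*$ is separable) with a fixed collection of smooth bounded test functions on $\R$. Concretely, I would fix a countable set $\{x_k'\}_{k\ge 1}\subseteq X^*$ that is dense in $X^*$ in the norm topology, and a countable family $\{g_j\}_{j\ge 1}\subseteq C_b^1(\R)$ that is rich enough to separate points of $\R$ and to generate the Borel $\sigma$-algebra on $\R$ — for instance, one may take the functions $t\mapsto \arctan(t-q)$ for $q\in\Q$ together with $t\mapsto \cos(qt)$, $t\mapsto\sin(qt)$ for $q\in\Q$, all of which lie in $\scr FC_b^1(X)$ when composed with a single $x_k'$. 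Then the desired sequence is an enumeration of $\{\,g_j(x_k')\,:\,j,k\ge 1\,\}\subseteq \scr FC_b^1(X)$, possibly augmented by finitely many coordinate-type functions.

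For part $(i)$, suppose $\mu,\nu\in\scr P$ with $\mu(\psi_n)=\nu(\psi_n)$ for all $n$. Then for every $k$ the pushforward measures $\mu\circ (x_k')^{-1}$ and $\nu\circ (x_k')^{-1}$ on $\R$ agree when integrated against every $g_j$; since the $g_j$ separate points and the family is measure-determining on $\R$ (e.g.\ the trigonometric functions determine a finite measure via its characteristic function), we get $\mu\circ(x_k')^{-1}=\nu\circ(x_k')^{-1}$ for all $k$. By density of $\{x_k'\}$ in $X^*$ and dominated convergence (or an approximation argument using tightness), this extends to $\mu\circ (x')^{-1}=\nu\circ (x')^{-1}$ for every $x'\in X^*$, i.e.\ $\mu$ and $\nu$ have the same one-dimensional projections along all continuous linear functionals. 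A separable Banach space is a standard Borel space and its Borel $\sigma$-algebra coincides with the cylindrical one generated by $X^*$; hence a probability measure on $X$ is determined by its finite-dimensional projections, and a further approximation shows it is already determined by the one-dimensional ones along a separating sequence. Therefore $\mu=\nu$. Separation on $\scr P_p$ follows since $\scr P_p\subseteq\scr P$.

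For part $(ii)$, one inclusion is immediate: each map $\mu\mapsto\mu(\psi_n)$ is $\W_p$-continuous (indeed $\psi_n\in C_b^1(X)$ has bounded Lipschitz derivative, so $|\mu(\psi_n)-\nu(\psi_n)|\le \|\nn\psi_n\|_\infty\,\W_1(\mu,\nu)\le \|\nn\psi_n\|_\infty\,\W_p(\mu,\nu)$), hence Borel measurable, so $\sigma(\{\mu\mapsto\mu(\psi_n)\})\subseteq\scr B(\scr P_p)$. For the reverse inclusion, I would use that $(\scr P_p,\W_p)$ is Polish (cited from \cite[Thm.~6.18]{V09}), that a separating sequence of continuous functions on a Polish space generates the Borel $\sigma$-algebra, and then observe that the larger family $\{\mu\mapsto\mu(\psi)\,:\,\psi\in\scr FC_b^1(X)\}$ — whose generated $\sigma$-algebra is the same as that of the countable subfamily, by a monotone-class / approximation argument — separates points by part $(i)$ and consists of continuous functions. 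The standard fact (see \cite[Lemma~I.something]{MR92} or a direct argument) that on a Polish space a countable point-separating family of bounded continuous functions generates the Borel $\sigma$-algebra then finishes the proof. The main obstacle I anticipate is the bookkeeping in reducing from the uncountable family $\scr FC_b^1(X)$ (or all of $X^*$) to a genuinely countable sequence $\{\psi_n\}$ while preserving both the separation property and the $\sigma$-algebra generation: this requires carefully choosing the countable dense set in $X^*$ and checking that the composition with the fixed $g_j$'s still separates measures, which rests on the approximation of arbitrary $x'\in X^*$ by the $x_k'$ uniformly on $\W_p$-compact (or uniformly integrable) families.
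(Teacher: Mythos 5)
Your construction works, but it follows a genuinely different route from the paper. The paper takes a sequence $\{x_i'\}\subseteq X^*$ separating the points of $X$ (no reflexivity needed for this step), forms the countable multiplicative family $\psi(x)=\prod_j x_{i_j}'(x)/(1+|x_{i_j}'(x)|)$, and then quotes Blount--Kouritzin \cite{BK10}: their Theorem~11(b) gives that a bounded, multiplicatively closed, point-separating class is separating for measures (your part $(i)$), and their Lemma~3(a), applied through the auxiliary metric $\rho(\mu,\nu)=\sum_n 2^{-n}|\mu(\psi_n)-\nu(\psi_n)|$, gives $\scr B(\scr P_p)=\sigma(\tau_\rho)\subseteq\bar\sigma$ (your part $(ii)$). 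You instead use a norm-dense sequence in $X^*$ (here reflexivity plus separability of $X$ is indeed what makes $X^*$ separable) composed with trigonometric functions, recover the one-dimensional pushforwards via characteristic functions, and then invoke the standard-Borel-space fact that a countable point-separating family of continuous functions on a Polish space generates the Borel $\sigma$-algebra (Lusin--Souslin/Kuratowski), which replaces the paper's appeal to \cite[Lemma 3(a)]{BK10}. Both routes are legitimate; the paper's buys brevity by outsourcing both steps to \cite{BK10} and avoids any density or characteristic-function argument, while yours is more self-contained on the measure-theoretic side and makes the mechanism (Cram\'er--Wold plus cylindrical $=$ Borel $\sigma$-algebra on a separable Banach space) explicit.

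Two soft spots you should repair. First, in part $(i)$ the sentence claiming that a measure is ``determined by the one-dimensional [projections] along a separating sequence'' is false as stated: on $\R^2$ the two coordinate functionals separate points, yet the one-dimensional marginals along them do not determine the measure. What your argument actually uses, and what you should say, is the Cram\'er--Wold device: since you have already extended the equality of pushforwards from the dense sequence to \emph{all} $x'\in X^*$, and linear combinations of functionals are again functionals, all finite-dimensional projections agree, and these generate $\scr B(X)$. Second, the citation ``\cite[Lemma~I.something]{MR92}'' for the generation fact in part $(ii)$ is not a real reference; either cite the Lusin--Souslin theorem (injective Borel maps between standard Borel spaces are Borel isomorphisms onto their images, applied to $\mu\mapsto(\mu(\psi_n))_{n}$) or use \cite[Lemma 3(a)]{BK10} as the paper does. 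Also note that the bare functionals $x_k'$ do not belong to $\scr FC_b^1(X)$ (the outer function must be bounded), so drop the remark about augmenting by ``coordinate-type functions''; it is not needed anywhere.
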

\begin{proof} Since $X$ is a separable Banach space,
there exists a sequence  $\{x'_i\}_{i\in\N}$ separating the points on $X$. Let $\{\psi_n\}_{n\in\N}\subseteq \scr FC_b^1(X)$ consist  of   all functions
\begin{equation*}
\psi(x):= \prod_{j=1}^m\frac{x'_{i_j}(x)}{1+|x'_{i_j}(x)|}\in[0,1],\quad x\in X,
\end{equation*}
for   $m\in\N$ and $i_1,\dots, i_m\in\N.$
Since $\{\psi_n\}_{n\in\N}$ is closed under multiplication and separates the points on $X$,
it satisfies $(i)$ according to   \cite[Theorem~11(b)]{BK10}.

To verify $(ii)$,  let $\bar \si$ be the $\si$-algebra on $\scr P_p$ induced by $\{\mu\mapsto \mu(\psi_n)\}_{n\in\N}$, and let    $\si(\tau_\rr)$ denote the $\sigma$-algebra generated by the family of all open sets $\tau_\rho$ w.r.t.~to the metric
\begin{equation*}
\rho(\mu,\nu):=\sum_{n=1}^\infty 2^{-n}  |\mu(\psi_n)-\nu(\psi_n)|,
\quad \mu,\nu\in\scr P_p.
\end{equation*}
Then $\si(\tau_\rr)\subseteq\bar\si.$
Noting that   $\{\psi_n\}_{n\in\N}$ are continuous and uniformly bounded,	according to   \cite[Lemma 3(a)]{BK10},  property $(i)$ of this lemma implies  $\scr B(\scr P_p)=\si(\tau_\rr),$
so that $\scr B(\scr P_p)\subseteq \bar\si.$ On the other hand, each $\psi_n$ is a bounded continuous function on $X$, so that $\mu\mapsto \mu(\psi_n)$ is continuous in $\scr P_p$, and hence
$\bar\si\subseteq  \scr B(\scr P_p).$ Therefore,  $(ii)$ is satisfied.
\end{proof}

\begin{lem}\label{L2}
The linear space $\scr FC_b^1(\scr P_p)$ is dense
in $L^2(\scr P_p,\LL)$.
\end{lem}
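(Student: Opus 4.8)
The goal is to show that $\scr FC_b^1(\scr P_p)$ is dense in $L^2(\scr P_p,\LL)$ for an arbitrary probability measure $\LL$ on $\scr P_p$. The plan is to combine a measure-theoretic argument with the separation property from Lemma \ref{lem:onBorel}. First I would recall the sequence $\{\psi_n\}_{n\ge 1}\subseteq \scr FC_b^1(X)$ furnished by Lemma \ref{lem:onBorel}, for which the maps $\mu\mapsto\mu(\psi_n)$ both separate points and generate $\scr B(\scr P_p)$. Since each $\psi_n$ takes values in $[0,1]$, the vector $\Theta(\mu):=(\mu(\psi_n))_{n\ge1}$ defines a Borel-measurable, injective map $\Theta:\scr P_p\to [0,1]^{\N}$, and by Lemma \ref{lem:onBorel}(ii) it is bi-measurable onto its image with respect to the product $\sigma$-algebra. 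Thus pulling back cylindrical functions on $[0,1]^{\N}$ along $\Theta$ gives exactly (the bounded-measurable closure of) the algebra generated by $\{\mu\mapsto\mu(\psi_n)\}$.

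The core step is then a standard functional-analytic density argument. Let $\A$ denote the algebra of finite linear combinations of functions of the form $\mu\mapsto h(\mu(\psi_1),\dots,\mu(\psi_m))$ with $h\in C_b^1(\R^m)$ (or even just $h$ bounded continuous); note $\A\subseteq\scr FC_b^1(\scr P_p)$ up to the obvious choice $g=h$ and $\psi_i$ among the fixed sequence. This $\A$ is an algebra of bounded functions separating points of $\scr P_p$ (by Lemma \ref{lem:onBorel}(i)) and containing constants. To pass from a Stone--Weierstrass-type statement to $L^2$-density, I would argue: first, the uniform closure of $\A$ contains all bounded continuous functions that are $\rho$-uniformly continuous, where $\rho$ is the metric from Lemma \ref{lem:onBorel}; but more robustly, one shows directly that $\A$ is dense in $L^2(\scr P_p,\LL)$ by a monotone-class / Dynkin argument. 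Concretely: the bounded pointwise-monotone-limit closure of $\A$ is a vector space closed under bounded monotone limits and containing an algebra generating $\scr B(\scr P_p)$, hence by the functional monotone class theorem it contains all bounded $\scr B(\scr P_p)$-measurable functions; since $L^2(\scr P_p,\LL)$-convergence is implied (via dominated convergence) by bounded pointwise convergence, the $L^2$-closure of $\A$ contains all bounded measurable functions, and these are dense in $L^2(\scr P_p,\LL)$ because $\LL$ is a finite measure.

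Finally I would tie up the approximation of a general $F\in L^2(\scr P_p,\LL)$: truncate $F$ to $F\wedge M\vee(-M)\in L^\infty$, approximate in $L^2$ by the previous step with an element of $\A\subseteq\scr FC_b^1(\scr P_p)$, and let $M\to\infty$. A small technical point is that $\scr FC_b^1(\scr P)$ in \eqref{CF} allows arbitrary $\psi_i\in\scr FC_b^1(X)$ and arbitrary $g\in C_b^1(\R^n)$, which is strictly larger than $\A$; so one only needs $\A$-density, and the claim for $\scr FC_b^1(\scr P_p)$ follows a fortiori. I expect the main obstacle to be purely bookkeeping: making sure that the composition $h(\mu(\psi_1),\dots,\mu(\psi_m))$ genuinely lands in $\scr FC_b^1(\scr P_p)$ (which it does, since each $\mu(\psi_i)$ is a bounded cylindrical coordinate and $h$ can be taken $C_b^1$ after a further uniform approximation of a bounded continuous $h$ by smooth ones on the compact range of the coordinates), and that the monotone class / measurability assertions invoke Lemma \ref{lem:onBorel}(ii) correctly rather than any unavailable topological structure on $(\scr P_p,\W_p)$. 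No genuinely hard estimate is needed here; the content is entirely the point-separation and $\sigma$-algebra-generation supplied by the preceding lemma.
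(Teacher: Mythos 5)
Your proof is correct and follows essentially the same route as the paper: both rest on Lemma \ref{lem:onBorel}(ii) together with a functional monotone class argument and dominated convergence (for the finite measure $\LL$) to show that the $L^2$-closure of the cylindrical functions contains every bounded Borel function, and then conclude by truncation. The only cosmetic difference is that you invoke the multiplicative-class form of the monotone class theorem on the algebra of cylinder functions, whereas the paper applies the set version to the $\cap$-stable class of cylinder sets and approximates the indicators $\eins_{B_1}(\mu(\psi_1))\cdots\eins_{B_m}(\mu(\psi_m))$ in $L^2$ of the image measure on $\R^m$ by $C_b^1$ functions.
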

\begin{proof}
Let $\scr A$ be the class of all subsets $A\subseteq \scr P_p$ given by
\begin{align*}
A:=\bigcap_{i=1}^m\,\{\mu\in\scr P_p\,:\,\mu(\psi_i)\in B_i\}
\end{align*}
for some $m\in\N$, $B_1,\dots B_m\in\scr B(\R)$ and $\psi_1,\dots,\psi_m\in\scr FC_b^1(X).$
Obviously, $\scr A$ is   $\cap\,$-stable, i.e.~$A_1\cap A_2\in\scr A$ for $A_1,A_2\in \scr A$.
Furthermore, Lemma \ref{lem:onBorel}(ii) implies that $\sigma(\scr A)=\scr B(\scr P_p)$.
Now, let $V$ denote the vector space of real-valued, measurable functions on $\scr P_p$ which coincide in $\LL$-a.e.~sense with some element of $\overline{\scr C^\textnormal{cyl}}$,
the topological closure of $\scr FC_b^1(\scr P_p)$ in $L^2(\scr P_p,\LL)$.
The claim of this Lemma reads $\overline{\scr C^\textnormal{cyl}}=L^2(\scr P_p,\LL)$.
It suffices to show that $V$ contains every bounded, measurable function.
The monotone class theorem for functions (see \cite[Preliminaries, Theorem 2.3]{BG68})
yields exactly the desired statement if $V$ meets the following properties:
\begin{enumerate}
\item[1)] $V$ contains the indicator function $\eins_A$ for any set $A\in\scr A$.
\item[2)] If $\{f_n\}_{n\in\N}\subseteq V$ is an increasing sequence of non-negative functions such that $f(\,\cdot\,):=\lim_{n\to\infty}f_n(\,\cdot\,)$ is bounded  on $\scr P_p$, then $f\in V$.
\end{enumerate}

To show 1), let  $A\in\scr A$, and $m\in\mathbb N$, $B_1,\dots,B_m\in \B(\R)$ and $\psi_1,\dots,\psi_m\in \scr F C_b^1(X)$ such that
$$\eins_A(\mu)=\prod_{i=1}^m\eins_{B_i}(\mu(\psi_i)),\ \ \mu\in\scr P_p.$$ If we denote the image measure of $\LL$ under $\scr P_p\ni \mu\mapsto(\mu(\psi_1),\dots,\mu(\psi_m))\in\R^m$ by $\LL_m$, then
$$ \int_{\scr P_p} \big| g(\mu(\psi_1),\dots,\mu(\psi_m))-\eins_A\big|^2  \, \d\LL(\mu) = \int_{\R^m} \Big| g(x)-\prod_{i=1}^m\eins_{B_i}(x_i)\Big|^2 \,\d \LL_m( x)$$
holds for any $g\in C_b^1(\R^m)$.
This implies  $\eins_A\in V$, since   $C_b^1(\R^m)$ is dense in $L^2(\R^m,\LL_m)$,

Now, let    $\{f_n\}_{n\in\N}\subseteq V$ and $f$ be in 2). It remains to show  that  $f\in  V$.
For $\varepsilon>0$ there exists $n\in\N$ such that
$(\int_{\scr P_p}|f-f_n|^2\d\LL)^{1/2}\leq \varepsilon/2$. Since $f_n\in V$,
we find  $u_n\in \scr FC_b^1(\scr P_p)$ such that
$(\int_{\scr P_p}| f_n-u_n|^2\d\LL)^{1/2}\leq \varepsilon/2$. Then the Minkowski inequality yields
$(\int_{\scr P_p}|u_n-f|^2\d\LL)^{1/2}\leq \varepsilon$. Hence, $f\in V$ as desired.
\end{proof}

To verify the tightness of capacity, we shall construct a class of reference functions.
For $a, b\in \R$ let $a\lor b:=\max\{a,b\}$,
$a\land b:=\min\{a,b\}$ and $a^+:=a\lor 0$.
For any $l\in \N,$ let
\begin{equation}\label{eqn:chi}
\chi_l(s):=-\frac{3l}{2} +\int_{-\infty}^s\Big[\Big(\frac{t}{l}+2\Big)^+\land 1\Big]
\land\Big[\Big(2-\frac{t}{l}\Big)^+\land 1\Big]\,\d t,\quad t\in\R.
\end{equation}
Then $\chi_l\in C_b^1(\R)$   with
\begin{equation*}
\chi_l(s)=s\quad\text{for } s\in[-l,l],\quad\text{and}\quad
\eins_{[-l,l]}(s)\leq\chi_l'(s)\leq\eins_{[-2l,2l]}(s),\quad s\in\R.
\end{equation*}
We recall the notation $p^*:=\frac{p}{p-1}$ for $p\in[1,\infty)$ as introduced above.

\begin{lem}\label{lem:abc} Assume that Condition $(1)$ in  Theorem $\ref{QRT}$ holds.
Let $\Phi\in C_b^1(\R)$ and $\gamma\in C^1(\R,[0,\infty))$ such that for some  constants $a,b\in(0,\infty)$   it holds
$$\sup_{s\in[0,\infty)}|\Phi'(s)|(1+s)^{\ff 1 {p^*}}\leq a\quad\text{and}\quad\sup_{s\in\R} |\gamma'(s)| (1+\gamma(s))^{-\ff 1 {p^*}}\le b.$$
Then  for any $m\in\N$,  Lipschitz function $f$ on $\R^m$,   and $x_i'\in X^*$ with $\|x_i'\|_{X^*}=1$ for $i=1,\dots,m$,
the function
\begin{equation*}
u(\mu):=\Phi\big(\mu\big(\gamma\circ f (x_1',\dots,x_m')\big)\big)\quad \text{for } \mu\in\scr P_p
\end{equation*}
belongs to $\D(\EE)$ with
\begin{equation*}
\EE(u,u)\leq C(ab)^2 \Big\|\sum_{i=1}^m|\partial_i f|\Big\|_{L^\infty(\R^m)}^2.
\end{equation*}
\end{lem}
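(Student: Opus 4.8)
The plan is to reduce the statement to an application of Condition $(C_1)$ by approximating $u$ with a sequence of cylindrical functions in $\scr FC_b^1(\scr P_p)$ and controlling the intrinsic derivatives uniformly. The obstacle is that neither $f$ nor the composite $\Phi(\mu(\gamma\circ f(x_1',\dots,x_m')))$ is a priori smooth enough to lie in $\scr FC_b^1(\scr P_p)$: $f$ is only Lipschitz, and the outer structure is a single functional, not a finite-dimensional cylinder in the $\mu(\psi_i)$. So first I would mollify. Replace $f$ by $f_k:=f*\rho_k$ with $\rho_k$ a standard mollifier on $\R^m$; then $f_k\in C^1(\R^m)$, $f_k\to f$ locally uniformly, and $\|\sum_i|\partial_i f_k|\|_{L^\infty}\le\|\sum_i|\partial_i f|\|_{L^\infty}=:\Lambda_f$ (convolution does not increase the Lipschitz constants in each coordinate). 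Also truncate $f_k$ outside a large ball using a cutoff, or compose with $\chi_l$ as in \eqref{eqn:chi}, to arrange $f_k\in C_b^1(\R^m)$ while keeping the coordinate-Lipschitz bound; I would pass $l\to\infty$ at the very end. Thus $\gamma\circ f_k(x_1',\dots,x_m')\in\scr FC_b^1(X)$ (it is a $C_b^1$ function of the $m$ bounded linear functionals $x_i'$), and hence $u_k(\mu):=\Phi(\mu(\gamma\circ f_k(x_1',\dots,x_m')))$ lies in $\scr FC_b^1(\scr P_p)\subseteq\D(\EE)$ by hypothesis.

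Next I would compute $Du_k(\mu)(x)$ via the chain rule for the intrinsic derivative, using \eqref{eqn:Df} applied with the single $\psi:=\gamma\circ f_k(x_1',\dots,x_m')$ and the outer scalar function $\Phi$:
\begin{equation*}
Du_k(\mu)(x)=\Phi'\big(\mu(\psi)\big)\,\nabla\psi(x),\qquad
\nabla\psi(x)=\gamma'\big(f_k(x_1'(x),\dots,x_m'(x))\big)\sum_{i=1}^m(\partial_i f_k)\big(x_1'(x),\dots,x_m'(x)\big)\,x_i'.
\end{equation*}
Since $\|x_i'\|_{X^*}=1$, the triangle inequality in $X^*$ gives
$\|\nabla\psi(x)\|_{X^*}\le|\gamma'(f_k(\dots))|\cdot\sum_i|\partial_i f_k(\dots)|\le|\gamma'(f_k(\dots))|\,\Lambda_f$.
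Now estimate $\|Du_k(\mu)\|_{T_{\mu,p}^*}=\|Du_k(\mu)\|_{L^{p^*}(X\to X^*,\mu)}$. Write $s:=\mu(\psi)$; note $\psi\ge0$ composed appropriately, but more importantly the two structural bounds on $\Phi$ and $\gamma$ are designed exactly so that $|\Phi'(s)|\le a(1+s)^{-1/p^*}$ and $|\gamma'(t)|\le b(1+\gamma(t))^{1/p^*}$ for the relevant arguments. Hence pointwise
$\|Du_k(\mu)(x)\|_{X^*}\le a\,b\,\Lambda_f\,(1+s)^{-1/p^*}\,(1+\gamma(f_k(\dots x)))^{1/p^*}$,
and raising to the $p^*$-th power and integrating $\mu(\d x)$,
\begin{equation*}
\|Du_k(\mu)\|_{T_{\mu,p}^*}^{p^*}
\le (ab\Lambda_f)^{p^*}\,(1+s)^{-1}\int_X\big(1+\gamma(f_k(x_1'(x),\dots))\big)\,\mu(\d x)
=(ab\Lambda_f)^{p^*}\,\frac{1+\mu(\gamma\circ f_k(\dots))}{1+\mu(\psi)}=(ab\Lambda_f)^{p^*},
\end{equation*}
because $\mu(\psi)=\mu(\gamma\circ f_k(\dots))=s$ and the integral of $1$ is $1$. (If $p=1$ so $p^*=\infty$, the same computation is done with the essential supremum and the exponents $0$, giving directly $\|Du_k(\mu)\|_{T_{\mu,1}^*}\le ab\Lambda_f$.) Therefore $\sup_{\mu\in\scr P_p}\|Du_k(\mu)\|_{T_{\mu,p}^*}\le ab\,\Lambda_f$ uniformly in $k$ (and in the truncation level $l$), and Condition $(C_1)$ yields $\EE(u_k,u_k)\le C(ab)^2\Lambda_f^2$.

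Finally I would pass to the limit. Since $\Phi$ is bounded and $\gamma\circ f_k\to\gamma\circ f$ uniformly on the (bounded, after truncation) range — hence $\mu(\gamma\circ f_k(\dots))\to\mu(\gamma\circ f(\dots))$ for every $\mu$ and boundedly — dominated convergence gives $u_k\to u$ in $L^2(\scr P_p,\LL)$. The sequence $(u_k)$ is bounded in the Hilbert space $\D(\EE)$ by the estimate above, so by the Banach–Saks theorem (or weak compactness plus Mazur's lemma) a sequence of convex combinations converges in the $\EE_1$-norm to a limit $v\in\D(\EE)$; that limit must equal $u$ since $\EE_1$-convergence implies $L^2$-convergence. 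Lower semicontinuity of $\EE$ under this convergence then gives $\EE(u,u)\le\liminf_k\EE(u_k,u_k)\le C(ab)^2\Lambda_f^2$. Lifting the truncation $l\to\infty$ by the same compactness-and-lsc argument (the bound is uniform in $l$) completes the proof. The main technical point to be careful about is that all approximations preserve both the boundedness needed for membership in $\scr FC_b^1$ and the coordinate-Lipschitz bound $\Lambda_f$, and that the structural inequalities on $\Phi,\gamma$ are invoked only at arguments where they hold (nonnegative arguments of $\Phi$, arbitrary reals for $\gamma$), which is automatic here since $\gamma\ge0$ forces $\mu(\psi)\ge0$.
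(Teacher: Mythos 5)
Your proposal is correct and follows essentially the same route as the paper: approximate $f$ by mollification and $\chi_l$-truncation so that $u$ is approximated by functions in $\scr FC_b^1(\scr P_p)$, establish the uniform bound $\sup_{\mu}\|Du_k(\mu)\|_{T_{\mu,p}^*}\le ab\,\big\|\sum_{i=1}^m|\partial_i f|\big\|_{L^\infty(\R^m)}$ (your weight-cancellation computation is precisely the detail behind the paper's asserted estimate), apply $(C_1)$, and pass to the limit via $L^2$-convergence together with $\EE_1$-boundedness and lower semicontinuity, where the paper invokes \cite[Lemma I.2.12]{MR92} in place of your Banach--Saks/Mazur argument. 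The only step to make fully explicit when removing the truncation is the growth bound $\gamma(s)\le A(1+|s|^p)$, which follows from the hypothesis on $\gamma'$ and provides the $\mu$-integrable dominating function of the form $B(1+\|x\|_X^p)$ used in the paper's display \eqref{WL0}.
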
	

\begin{proof}

(a) We first prove the result under the additional assumption that $f$ is a bounded function, i.e.~$f\in C_b^1(\R^m)$. In this case,
$\gamma\circ f\in C_b^1(\R^m)$ and the function
\begin{equation*}
u(\mu):=\Phi\big(\mu\big(\gamma\circ f (x_1',\dots,x_m')\big)\big),\quad \mu\in\scr P_p
\end{equation*}
is in  $u\in\scr FC_b^1(\scr P_p)$.
By \eqref{eqn:Df}, with
\begin{equation*}
T:X\ni x\mapsto\big(\phantom{}_{X^*}{\langle}x_1',x{\rangle}_X ,\dots,
\phantom{}_{X^*}{\langle}x_m',x{\rangle}_X\big)\in\R^m
\end{equation*}
it holds
\begin{equation*}
Du(\mu)(x)=\Phi'\big(\mu(\gamma\circ f \circ T)\big)
(\gamma'\circ f)(Tx)
\sum_{i=1}^m\partial_i f(Tx)\,x_i'
\end{equation*}
for $\mu\in\scr P_p$ and $x\in X$. So,
$$
\sup_{\mu\in\scr P_p}\|Du(\mu)\|_{T_{\mu,p}^*}^2\leq
(ab)^2 \Big\|\sum_{i=1}^m|\partial_i f|\Big\|_{L^\infty(\R^m)}^2,
$$
and the desired assertion follows from   Condition $(1)$ in  Theorem $\ref{QRT}$.

(b) Next, let $f\in C^1(\R^m)$.  For $l\in\N$ the composition
$f_l:=\chi_l\circ f$ yields an element of $C_b^1(\R^m)$ with
$|\partial_i f_l(z)|\leq |\partial_i f(z)|$ for $i=1,\dots, m$, $z\in\R^m$.
By what has been shown above,  for each $l$ the function
\begin{equation*}
u_l(\mu):=\Phi\big(\mu\big(\gamma\circ f_l (x_1',\dots,x_m')\big)\big)\quad \text{for } \mu\in\scr P_p
\end{equation*}
is in $\D(\EE)$ for $l\in\N$ with
\begin{equation}\label{eqn:eul}
\EE(u_l,u_l)\leq C (ab)^2\sup_{z\in\R^m}\Big( \sum_{i=1}^m|\partial_if(z)|\Big)^2.
\end{equation}
Clearly, $\lim_{l\to\infty}\gamma\circ f_l(Tx)=\gamma\circ f(Tx)$ for $x\in X$. By the condition on $\gg$
we find constants $A,B\in (0,\infty)$ such that
\beq\label{WL0}\big|\gamma\circ f_l(Tx)\big|\leq A\big(1+|f_l(Tx)|^p\big)\leq B\big(1+ \|x\|_X)^p\big),\ \ x\in X.\end{equation}
Then  Lebesgue's dominated convergence applies and
$$\lim_{l\to\infty}\mu(\gamma\circ f_l\circ T)=\mu(\gamma\circ f\circ T),\quad \mu\in \scr P_p.$$
Consequently, again by dominated convergence, we conclude $\lim_{l\to\infty}u_l=u$ in $L^2(\scr P_p,\LL)$.
Now, \cite[Lemma I.2.12]{MR92} yields $u\in\D(\EE)$ and $\EE(u,u)\leq\liminf_{l\to\infty}\EE(u_l,u_l)$ as the sequence $\{u_l\}_l$ is bounded w.r.t.~$\EE_1^{1/2}$-norm in view of \eqref{eqn:eul}.
Since \eqref{eqn:eul} delivers the desired upper bound, the proof is complete.

(c) Finally, let $f$ be a Lipschitz continuous function on $\R^m$. Then, $f$ is weakly differentiable on $\R^m$ with weak partial derivatives $\partial_i f\in L^\infty(\R^m)$. Let $h$ be a non-negative smooth function on $\R^m$ with compact support and
$\int h(z)\d z=1$. Then the mollifying approximations $\{f^{(n)}\}_{n\in \N}$ of $f$ defined by
$$f^{(n)}(x):= n^{-m} \int_{\R^m} f(z) h\big(n(z-x)\big)\d z= \int_{\R^m} f(x+n^{-1}z) h(z)\d z,\ \ x\in \R^m$$ satisfies $f^{(n)}\in C^1(\R^m)$,
$\lim_{n\to\infty} f^{(n)}=f$ and
$$\sup_{n\in\N}\sup_{z\in \R^m}\sum_{i=1}^m|\partial_if^{(n)} (z)|\le \Big\|\sum_{i=1}^m|\partial_if(z)|\Big\|_{L^\infty(\R^n)},$$
so that
\beq\label{VV}\lim_{n\to\infty} \gamma\circ f^{(n)} (x_1',\dots,x_m')=\gamma\circ f  (x_1',\dots,x_m'),\end{equation}
and by step (b),
the functions
\begin{equation*}
u^{(n)}(\mu):=\Phi\big(\mu\big(\gamma\circ f^{(n)} (x_1',\dots,x_m')\big)\big),\ \  \mu\in\scr P_p,\ n\in\N
\end{equation*}   belong to $\D(\EE)$ with
\beq\label{WL2}
\sup_{n\in\N}\EE(u^{(n)},u^{(n)})\leq C (ab)^2  \Big\|\sum_{i=1}^m|\partial_if(z)|\Big\|_{L^\infty(\R^m)}^2.
\end{equation} Moreover, since $f$ is Lipschitz continuous and $h$ is smooth with compact support,
we find constants $A,B\in (0,\infty)$ such that \eqref{WL0} holds. Thus,
by   the dominated convergence theorem, \eqref{VV} implies
$$\lim_{n\to\infty}\mu\big(\gamma\circ f^{(n)} (x_1',\dots,x_m')\big)= \mu\big(\gamma\circ f  (x_1',\dots,x_m')\big),\ \ \mu\in \scr P_p,$$ while  the definition  of $u$ and $u^{(n)}$ with $\Phi\in C_b^1(\R)$ yields
$$\lim_{n\to\infty}\|u^{(n)}-u\|_{L^2(\scr P_p,\LL)}=0.$$
According to \cite[Lemma I.2.12]{MR92},	this together with \eqref{WL2} finishes the proof.
\end{proof}

As an application    of Lemma \ref{lem:abc}, we have the following assertion.

\begin{prp}	\label{prp:abc}
Let $\Phi\in C_b^1(\R)$ and $\gamma\in C^1(\R,[0,\infty))$ be as in   Lemma \ref{lem:abc} with constants $a,b\in(0,\infty)$.  For $M\in\N$ and $y_1,\dots,y_M\in X$, define
\begin{align*}
&u(\mu):=\Phi(\mu(\gamma\circ g )),\ \ \  \mu\in\scr P_p,\\
&g(x):=\min \big\{ \|x-y_j\|_X\,:\,1\le j\le M\big\},\quad x\in X.
\end{align*}
Then $u\in \D(\EE)$ with  $\EE(u,u)\leq C(ab)^2$.
\end{prp}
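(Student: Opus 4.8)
The function $g(x)=\min\{\|x-y_j\|_X:1\le j\le M\}$ is $1$-Lipschitz on $X$ but is not cylindrical, so it is not directly covered by Lemma~\ref{lem:abc}. The plan is therefore to approximate $g$ from below by cylindrical Lipschitz functions built from coordinate functionals, in such a way that the $\ell^1$-norm of the gradient of the approximants stays bounded by $1$; each approximant then produces, via Lemma~\ref{lem:abc}, an element of $\D(\EE)$ with energy at most $C(ab)^2$ uniformly, and $u$ is recovered as the $L^2(\scr P_p,\LL)$-limit by means of \cite[Lemma I.2.12]{MR92}.

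Concretely, I would first use separability of $X$ together with the Hahn--Banach theorem (and an approximation on a countable dense subset of $X$) to fix a sequence $\{x_i'\}_{i\ge 1}\subseteq X^*$ with $\|x_i'\|_{X^*}=1$ for every $i$ and
\[
\|x\|_X=\sup_{i\ge 1}|x_i'(x)|,\qquad x\in X .
\]
For $n\in\N$ set $f_n(z):=\min_{1\le j\le M}\max_{1\le i\le n}|z_i-x_i'(y_j)|$ for $z\in\R^n$, and
\[
u_n(\mu):=\Phi\bigl(\mu(\gamma\circ f_n(x_1',\dots,x_n'))\bigr),\qquad\mu\in\scr P_p .
\]
Since $f_n$ is a minimum of finitely many maxima of functions of the form $z\mapsto\pm(z_i-x_i'(y_j))$, whose gradients are $\pm e_i$, it is Lipschitz on $\R^n$ and at almost every point its weak gradient coincides with some $\pm e_i$; hence $\bigl\|\sum_{i=1}^n|\partial_i f_n|\bigr\|_{L^\infty(\R^n)}\le 1$ for all $n$. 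Lemma~\ref{lem:abc}, applied with $m=n$, the Lipschitz function $f_n$ and the norm-one functionals $x_1',\dots,x_n'$, then yields $u_n\in\D(\EE)$ together with $\EE(u_n,u_n)\le C(ab)^2$ for every $n$.

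To conclude I would pass to the limit $n\to\infty$. For each fixed $j$ the sequence $\max_{1\le i\le n}|x_i'(x-y_j)|$ increases to $\|x-y_j\|_X$, so $f_n(x_1'(x),\dots,x_n'(x))\uparrow g(x)$ for every $x\in X$, and in particular $\gamma\circ f_n(x_1',\dots,x_n')\to\gamma\circ g$ pointwise on $X$. Exactly as in the derivation of \eqref{WL0} in the proof of Lemma~\ref{lem:abc}, the growth condition $\sup_s|\gamma'(s)|(1+\gamma(s))^{-1/p^*}\le b$ gives constants $A,B\in(0,\infty)$ with $|\gamma(f_n(x_1'(x),\dots,x_n'(x)))|\le A(1+g(x)^p)\le B(1+\|x\|_X^p)$ for all $n$ and $x$; since $\mu(\|\cdot\|_X^p)<\infty$ for $\mu\in\scr P_p$, dominated convergence gives $\mu(\gamma\circ f_n(x_1',\dots,x_n'))\to\mu(\gamma\circ g)$ for every $\mu\in\scr P_p$, and then, using that $\Phi$ is continuous and bounded and $\LL$ is a probability measure, a further application of dominated convergence yields $u_n\to u$ in $L^2(\scr P_p,\LL)$. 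As the sequence $\{u_n\}$ is bounded in the $\EE_1^{1/2}$-norm, \cite[Lemma I.2.12]{MR92} gives $u\in\D(\EE)$ and $\EE(u,u)\le\liminf_{n\to\infty}\EE(u_n,u_n)\le C(ab)^2$.

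The step that requires the most care, and the only genuinely non-routine point, is the construction of the cylindrical approximants $f_n$ with $\ell^1$-gradient uniformly bounded by $1$: it is precisely the min--max structure built from coordinate functionals of norm one that makes the constant in $\EE(u_n,u_n)\le C(ab)^2$ independent of $n$, so that it survives the passage to the limit. Once that uniform bound is in hand, the remainder is a standard monotone-approximation and dominated-convergence argument.
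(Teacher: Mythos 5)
Your proposal is correct and follows essentially the same route as the paper's proof: approximate $g$ by cylindrical min--max Lipschitz functions built from norm-one functionals norming a dense subset of $X$, apply Lemma \ref{lem:abc} to get the uniform bound $\EE(u_n,u_n)\le C(ab)^2$, and pass to the limit via dominated convergence and \cite[Lemma I.2.12]{MR92}. The only cosmetic differences are that you use $\max_i|x_i'(x-y_j)|$ where the paper uses $\max_k{}_{X^*}\langle x_k',x-y_j\rangle_X$ without absolute values, and that you fold the paper's separate step on min--max functions with a.e.\ gradient of $\ell^1$-norm $1$ directly into the application of Lemma \ref{lem:abc}.
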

\begin{proof} By  Lemma \ref{lem:abc}, we need to approximate the distance function by using $f(x_1',\cdots, x_m')$ for $m\in\N,
x_1',\cdots, x_m'\in X^*$ and Lipschitz functions $f$ on $\R^m$.

(a) For $m\in\N, r\in\R^m$ and subsets $I_1,\dots, I_k$ of $\{1,\dots, m\},$ the function
\begin{equation*}
f(z):=\min \Big\{ \max_{i\in I_j}(z_i+r_i)\,: \,j=1,\dots,k\Big\},\quad z\in\R^m
\end{equation*}
is Lipschitz continuous  with
$$\sum_{i=1}^m|\partial_if(z)|=1\quad\text{for }\d z\text{-a.e.~}z\in\R^m.$$
By Lemma \ref{lem:abc}, the associated function $u: \scr P_p\to\R$ belongs to  $\D(\EE)$ with   $\EE(u,u)\leq C(ab)^2.$

(b)	Let $\{x_k:\, k\in\N\}$ be a dense subset of $X$. For each $k\in\N$ we choose $x_k'\in X^*$ with ${\|x_k'\|}_{X^*}=1$ and $\phantom{}_{X^*}{\langle}x_k',x_k{\rangle}_X={\|x_k\|}_X$.
Moreover, let $M\in\N$, $y_1,\dots,y_M\in X$ and
\begin{equation*} g_l(x):=\min_{j=1,\dots,M}\Big(\max_{k=1,\dots,l}\,\phantom{}_{X^*}{\langle}x_k',x-y_j{\rangle}_X\Big),\quad x\in X,\,l\in\N.
\end{equation*}
As shown in step (a) in the present proof,  the function
\begin{equation*}
u_l(\mu):=\Phi(\mu(\gamma\circ g_l))\quad\text{for }\ \mu\in\scr P_p
\end{equation*} is in $\D(\EE)$ with   $\EE(u_l,u_l)\leq C(ab)^2.$
By construction of $x_k'$, $k\in\N$, it holds $\sup_{k\in\N}\phantom{}_{X^*}{\langle}x_k',x{\rangle}_X=\|x\|_X$ for $x\in X$.
Consequently,
\begin{equation*}
\lim_{l\to\infty} g_l(x)=\min\big\{ \|x-y_i\|_X\,:\,j\in\{1,\dots,M\}\big\}=:g(x)\quad\text{for }x\in X.
\end{equation*}
Since $\sup_{l\in\N}|g_l(x)|\leq \max_{j=1,\dots,M}\|x-y_j\|_X,$ we find  constants $c_1,c_2 \in (0,\infty)$ such that
\begin{equation*}
\big|\gamma\circ  g(x)\big|\leq c_1\big(1+|g(x)|^p\big) \leq c_2\big(1+ \|x\|_X)^p\big),\quad x\in X.
\end{equation*}
So,  by Lebesgue's dominated convergence twice as in the proof of  Lemma \ref{lem:abc}, we obtain
$$\lim_{l\to\infty}\|u_l-u\|_{L^2(\scr P_p,\LL)}= 0.$$   By  \cite[Lemma I.2.12]{MR92}, this together with
$\EE(u_l,u_l)\le C(ab)^2$ finishes the proof.
\end{proof}


\begin{proof}[Proof of Theorem \ref{QRT}]
Lemma \ref{lem:onBorel}(i) together with   $(C_1)$  yields the existence of a countable set $\{f_i\}_{i\in\N}$ of quasi-continuous functions in $\D(\EE)$   which
separate points in $\scr P_p$.
So,  it suffices  to find a $\EE$-nest of compact sets in terms of   $(C_2)$.

First, we recall a characterization of precompact sets in $\scr P_p$  as stated in \cite[Proposition 2.2.3]{PZ20}. The closure w.r.t.~$\W_p$ of a set $\scr A\subseteq \scr P_p$ is compact if and only
\begin{itemize}
\item[(1)] (Uniform Integrability) $\displaystyle\lim_{R\to\infty}\displaystyle\sup_{\mu\in \scr A}\mu\big(\|\,\cdot\,\|_X^p\eins_{[R,\infty)}(\|\,\cdot\,\|_X\big)\big) =0,$
\item[(2)] (Tightness) for every $\varepsilon>0$ there is a compact set $Y\subseteq X$ such that $\displaystyle\sup_{\mu\in \scr A}\mu(Y^c)\le  \varepsilon$.
\end{itemize}
With this characterization, we only need to find   an $\EE$-nests of closed sets $\{K_n^{(i)}\}_{n\in\N}$ for $i=1,2,$ such that (1) holds for $\scr A=K_n^{(1)}$ and (2) holds for $\scr A=K_n^{(2)}$.
When this is achieved,  $\{K_n:=K_n^{(1)}\cap K_n^{(2)}\}_{n\in\N}$ is a $\EE$-nest of compact sets, and hence the proof is finished.
Indeed, as $\text{Cap}_1$ is a Choquet capacity on $\scr P_p$, it holds
\begin{align*}
\text{Cap}_1(X\setminus K_n)&=
\text{Cap}_1\big(\big(X\setminus K_n^{(1)}\big)\cup\big( X\setminus K_n^{(2)}\big) \big)\\
&\leq \text{Cap}_1\big(X\setminus K_n^{(1)}\big)+\text{Cap}_1\big(X\setminus K_n^{(2)}\big)
\overset{n\to\infty}{\longrightarrow} 0.
\end{align*}

(a) Construction of $\{K_n^{(1)}\}_{n\in\N}.$    For $k\in\N$ we let
\begin{equation}\label{eqn:ukD}
u_k(\mu):=\chi_1\big(\mu\big(\gamma_k( \|\,\cdot\,\|_X)\big)\big),\quad  \mu\in\scr P_p,
\end{equation}
where $\chi_1$ is the function from \eqref{eqn:chi} with $l=1$ and
$$\gg_k(s):= \big(1+[(s-k)^+]^2\big)^{\ff p 2}-1,\ \ s\in\R.$$
It is easy to find constants $a,b\in (0,\infty)$ independent of $k$ such that conditions in
Lemma \ref{lem:abc} holds for $(\Phi,\gg)=(\chi_1, \gg_k), k\in \N.$
So,  Proposition \ref{prp:abc} implies
\begin{equation*}
\sup_{k\in\N}\EE(u_k,u_k)<\infty,
\end{equation*}
and  by Lebesgue's dominated convergence theorem twice as in the proof of Lemma \ref{lem:abc}, we have
$  u_k(\mu)\to 0$ as $k\to\infty$ for each $\mu\in \scr P_p,$ and
$$\lim_{k\to\infty} \LL(|u_k|^2)=0.$$
By \cite[Lemma I.2.12]{MR92},  there exists a subsequence $\{u_{k_l}\}_{l\in\N}$ such that
\begin{equation}
v_m:= \ff 1 m \sum_{k=1}^m u_{l_k}\overset{m\to\infty}{\longrightarrow}0
\end{equation}
strongly in the Hilbert space $(\D(\EE),\EE_1)$.
Then, by \cite[Proposition III.3.5]{MR92}, there exists an $\EE$-nest of closed sets   $\{K^{(1)}_n\}_{n\ge 1}$ and a subsequence $\{v_{m_k}\}_{k\in\N}$ such that
$$\lim_{k\to\infty} \sup_{\mu\in K_n^{(1)}} v_{m_k}(\mu) =0\quad\text{for every }n\in\N.$$
Since $\{u_k\}_{k\in\N}$ is a decreasing sequence, it holds  $v_{m}(\mu)\ge u_{l_{m}}(\mu)$ for every $m\in\N$ and $\mu\in\scr P_p$.
In particular,  $v_{m_k}(\mu)\ge u_{l_{m_k}}(\mu)$ for $\mu\in\scr P_p$ and $k\in\N$.
Now, it follows
\begin{equation*}
\lim_{k\to\infty} \sup_{\mu\in K_n^{(1)}} u_{l_{m_k}}(\mu) =0\quad\text{for every }n\in\N,
\end{equation*}
which in turn implies
\begin{equation}\label{*3}
\lim_{k\to\infty} \sup_{\mu\in K_n^{(1)}} \mu\big(\gamma_{l_{m_k}}(\|\,\cdot\,\|_X)\big)
=0\quad\text{for every }n\in\N.
\end{equation}
Next, by the definition of $\gg_k$  we can choose $R_k\in\N$ for each $k\in\N$ such that
$$\mu\big(\gamma_k(\|\,\cdot\,\|_X)\big)\ge\mu\big(\|\,\cdot\,\|_X^p\, \eins_{[R_k,\infty)}(\|\,\cdot\,\|_X)\big),\quad\mu\in\scr P_p.$$
This together with \eqref{*3} yields that for every $n\in\N$,
\begin{align*}
\liminf_{k\to\infty}\,\sup_{\mu\in K_n^{(1)}}\mu\big(\|\,\cdot\,\|_X^p\eins_{[R_k,\infty)}(\|\,\cdot\,\|_X\big)\big)
\leq \liminf_{k\to\infty}\,\sup_{\mu\in K_n^{(1)}} \mu\big(\gamma_{k}(\|\,\cdot\,\|_X)\big)=0.
\end{align*}
Thus,  (1) holds for $\scr A= K_n^{(1)}$ as desired.

(b) Construction of $\{K_n^{(2)}\}_{n\in\N}.$   Let $\{y_i\}_{i\in\N}$ be a countable dense subset of $X$, and for each   $k\in\N$, let
\begin{equation*}
u_k(\mu):=\mu\Big(\chi_1\big(\min\big\{ \|\,\cdot\,-y_i\|_X\,: \,i=1,\dots,k\big\} \big) \Big),\quad  \mu\in\scr P_p.
\end{equation*}
Again, $\chi_1$ is the function from \eqref{eqn:chi} with $l=1$. Noting that
$\chi_1\big(\min\big\{ \|\,\cdot\,-y_i\|_X\,: \,i=1,\dots,k\big\} \big) $ decreases to $0$ as $k\to\infty$,
by Proposition \ref{prp:abc} and
the dominated convergence theorem, we have
\begin{equation*}
\sup_{k\in\N}\EE(u_k,u_k)<\infty,\ \ \lim_{k\to\infty} \LL(|u_k|^2)=0.
\end{equation*}
So, as shown above    \cite[Lemma I.2.12]{MR92} and \cite[Proposition III.3.5]{MR92} imply the existence of a subsequence  of $\{u_k\}_k$ which converges to zero quasi-uniformly. The arguments, which include the strong convergence w.r.t.~$\EE_1^{1/2}$ of the C\'esaro means for a suitable subsequence together with the fact that $\{u_k\}_k$ is a decreasing in $k$, are completely analogous to step (a). Hence, there exists a nest $\{K_n^{(2)}\}_{n\in\N}$ such that for each $n\in\N$ the following property holds:
\begin{equation}\label{eqn:km}
\text{For }\varepsilon>0 \text{ there exists }\{k_m\}_m\subseteq \N\text{ with }\sup_{\mu\in K_n^{(2)}}u_{k_m}(\mu)\leq \frac{\varepsilon}{m2^m},\ \ m\in\N.
\end{equation}
It remains to show that $K_n^{(2)}$ satisfies property (2) for fixed $n\in\N$. Let $m\in\N$, $\varepsilon>0$ be arbitrary and $k_m$ be chosen according to \eqref{eqn:km}. Since for $r\leq 1$ and $s\in[0,\infty)$ it holds
$$\chi_1(s)\geq r\quad\text{if and only if}\quad s\geq r,$$
we estimate
\begin{align}\label{eqn:e2m}
&\mu\Big(\Big\{x\in X\,: \,\min\big(\big\{ \|\,x-y_i\|_X\,:\,i=1,\dots,k_m\big\} \geq \frac{1}{m} \Big\}\Big)\\\nonumber
&=\mu\Big(\Big\{x\in X\,: \,\chi_1\big(\min \big\{ \|\,x-y_i\|_X\,:\,i=1,\dots,k_m\big\} \big)\geq \frac{1}{m} \Big\}\Big)
\\&\leq m \,u_{k_m}(\mu)\leq \frac{\varepsilon}{2^m},\ \ m\in\N,\ \mu\in K_n^{(2)}.\nonumber
\end{align}
Now we define $Y:=\bigcap_{m\in\N}Y_m$ for
$$Y_m:=\Big\{x\in X\,: \,\min \big\{ \|\,x-y_i\|_X\,:\,i=1,\dots,k_m\big\} < \frac{1}{m} \Big\}.$$  Obviously, $Y$ is a totally bounded set in $X$ and hence the closure $\overline Y$ a compact set. This proves the tightness of $K_n^{(2)}$, because for any $\mu\in K_n^{(2)}$ using \eqref{eqn:e2m} it holds
\begin{align*}
\mu(X\setminus \overline Y)\leq \mu\Big(\bigcup_{m\in\N}X\setminus Y_m\Big)\leq
\sum_{m=1}^\infty\mu(X\setminus Y_m)\leq \sum_{m=1}^{\infty}\frac{\varepsilon}{2^m}=\varepsilon
\end{align*}
and the choice of $\varepsilon$ above is arbitrary.
\end{proof}

\section{Quasi-regular image Dirichlet forms on \texorpdfstring{$\scr P_p$}{Pp}}\label{sec:gradForm}

\noindent In this section, we prove the quasi-regularity for image Dirichlet forms on $\scr P_p$ under the map
\begin{equation}\label{eqn:PsiDef}
\Psi: T_{\mu_0,p}\ni \phi\mapsto \mu_0\circ \phi^{-1} \in \scr P_p
\end{equation}
from the tangent space $T_{\mu_0,p}$ for a fixed element   $\mu_0\in \scr P_p$.
To shorten notation, we set
$$T_0:=T_{\mu_0,p}=L^p(X\to X,\mu_0),\ \ T_0^*:= T_{\mu_0,p}^*=L^{p^*}(X\to X^*, \mu_0).$$
The map $\Psi$  is Lipschitz continuous, since
\begin{align*}
&\W_p(\Psi(\phi_1),\Psi(\phi_2))^p\leq \int_{X\times X}\|x-y\|_X^p\,\d\pi(x,y)\\
&= \int_{X}\|\phi_1(x)-\phi_2(x)\|_X^p\,\d\mu_0(x)
= \|\phi_1-\phi_2\|_{T_{0}}^p,\ \ \phi_1,\phi_2\in T_0.
\end{align*}
In the case where $X$ is a separable Hilbert space and  $\mu_0\in\scr P_p$ is  absolutely continuous with respect to a  non-degenerate Gaussian measure,
the theory of optimal transport provides that
$\Psi$ is surjective and that for any $\mu\in\scr P_p$, there exists a unique  $\phi_\mu\in T_{0}$ such that
\begin{equation*}
\mu_0\circ\phi_\mu^{-1}=\mu\quad\text{and}\quad \W_p(\mu_0,\mu)=\|\textnormal{id}-\phi_\mu\|_{T_{0}}
\end{equation*} 
(see  \cite[Theorem~6.2.10]{AGS05}).
Here, $\phi_\mu$ is called the optimal map from $\mu_0$ to $\mu.$
In particular, this holds if $X=\R^d$ and $\mu_0\in \scr P_p$ is absolutely continuous w.r.t.~the Lebesgue measure.

Let $\LL_0$ be a probability measure on $T_{0}$.  Then the image measure $$\LL:=\LL_0\circ\Psi^{-1}$$ is a probability measure on $\scr P_p$. The map
$$L^2(\scr P_p,\LL)\ni u\mapsto u\circ\Psi\in L^2(T_0,\LL_0)$$ is isometric, i.e.
\beq\label{WW} \LL(uv)=\LL_0\big((u\circ\Psi)(v\circ\Psi)\big),\ \ \ u,v\in L^2(\scr P_p,\LL).\end{equation}
We would like to infer that, if $\Psi$ is surjective, then choosing $\LL_0$ such that
$$\LL_0(U)>0\quad\text{for any non-empty open set}\quad U\subseteq T_0$$
results in $\LL$ bearing the analogous property, i.e.~it takes a strictly positive value on each non-empty open set in $\scr P_p$. We refer to this property of $\LL$ (resp.~$\LL_0$) as full 
topological support. An example for such a measure is found in Section \ref{sec:exaFTS}.
In general, since the topology of $\scr P_p$ is second countable and in particular strongly Lindel\"of, the support of a measure on the Borel $\sigma$-algebra is well-defined. For a measurable function $u:\scr P_p\to\R$ we set $\supp[u]:=\supp[|u|\LL]$.

In the following, a class of quasi-regular Dirichlet forms   in $L^2(\scr P_p, \LL)$ are constructed by using the image of Dirichlet forms  in $L^2(T_0,\LL)$. This provides  quasi-regular local Dirichlet forms associated with diffusion processes on $\scr P_p$. 

\subsection{Main result}
\noindent From here on, the Dirichlet forms we consider are assumed to be symmetric. Let
$(\widetilde\EE,\D(\widetilde\EE))$ be a symmetric Dirichlet form in $L^2(T_{0},\LL_0)$, where $\D(\widetilde\EE)$ contains the following defined  class $C_b^1(T_{0})$ of functions on $T_0$.

\beg{defn}\label{def:Cb1T}   $C_b^1(T_{0})$   consists of all bounded Fr\'echet differentiable functions $f$ on $T_0$
with
$$T_0\ni\phi\mapsto \nabla f(\phi)\in L^{1}(X\to X^*,\mu_0) $$ continuous,   and
\begin{equation*}
\sup_{\phi\in T_0}\|\nabla f(\phi)\|_{L^\infty(X\to X^*,\mu_0)}<\infty.
\end{equation*}\end{defn}

Note that by the dominated convergence theorem,  if $f\in C_b^1(T_{0})$ then
$$T_0\ni\phi\mapsto \nabla f(\phi)\in L^{q}(X\to X^*,\mu_0) $$ is continuous for all $q\in [1,\infty).$
The main result of this section is the following.

\begin{thm}\label{TN}
Let $\LL_0$ be Borel probability measure on $T_0$, and let $(\tt\EE,\D(\tt \EE))$ be a symmetric Dirichlet form
in $L^2(T_0,\LL_0)$ such that $ C_b^1(T_{0})\subseteq \D(\tt \EE).$
We have the following assertions.
\beg{enumerate} \item[$(1)$] It holds
\beq\label{TU}   C_b^1(\scr P_p)\circ\Psi:=\big\{u\circ\Psi\,:\, u\in C_b^1(\scr P_p)\big\} \subseteq C_b^1(T_{0}),\end{equation}
and the bilinear form
$$\EE(u,v):= \tt\EE(u\circ\Psi, v\circ \Psi),\ \ u,v\in C_b^1(\scr P_p)$$
is closable in $L^2(\scr P_p,\LL)$, where $\LL:=\LL_0\circ\Psi^{-1}$.  Its  closure
$(\EE,\D(\EE))$ is a  Dirichlet form satisfying
\beq\label{EE'} \beg{split}&\D(\EE)\circ\Psi:=\big\{u\circ\Psi:\ u\in \D(\EE)\big\}\subseteq \D(\tt\EE),\\
&\EE(u,v)=\tt\EE(u\circ\Psi, v\circ\Psi),\ \ u,v\in \D(\EE).\end{split}\end{equation}
\item[$(2)$]   If the generator $(\tt L,\D(\tt L))$ of $(\tt\EE,\D(\tt\EE))$ has purely discrete spectrum, let $\{\si_n\}_{n\ge 1}$ be all eigenvalues of $-\tt L$ listed in the increasing order with multiplicities.
Then the generator $(L,\D(L))$ of $(\EE,\D(\EE))$ also has purely discrete spectrum, and eigenvalues $\{\ll_n\}_{n\ge 1}$ of $-L$ listed in the same order satisfies $\ll_n\ge \si_n, n\ge 1.$
If moreover $\sum_{n=1}^\infty \e^{-\si_n t}<\infty$ for $t>0$, then the diffusion semigroup $P_t:=\e^{t L}$ has heat kernel $p_t(\mu,\nu)$ with respect to $\LL$ satisfying
$$\int_{\scr P_p\times\scr P_p}p_t(\mu,\nu)^2\LL(\d\mu)\LL(\d\nu)\le \sum_{n=1}^\infty\e^{-2\ll_n t}<\infty,\ \ t>0.$$
\item[$(3)$] If there exists a constant $C>0$ such that
\beq\label{QQ} \tt\EE(f,f)\le C \sup_{\phi\in T_0} \|\nn f(\phi)\|_{T_0^*}^2,\ \ f\in C_b^1(T_{0}),\end{equation}
then $(\EE,\D(\EE))$ is quasi-regular.\end{enumerate}
\end{thm}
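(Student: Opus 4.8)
The plan is to deduce quasi-regularity of $(\EE,\D(\EE))$ from Theorem \ref{QRT} by verifying its two hypotheses $(C_1)$ and $(C_2)$. For $(C_1)$, I would first recall from part $(1)$ that $\scr FC_b^1(\scr P_p)\subseteq C_b^1(\scr P_p)\subseteq\D(\EE)$, with $\EE(f,f)=\tt\EE(f\circ\Psi,f\circ\Psi)$ for $f\in C_b^1(\scr P_p)$. So it suffices to control $\tt\EE(f\circ\Psi,f\circ\Psi)$ by the uniform norm of the intrinsic derivative of $f$. The natural device is the hypothesis \eqref{QQ}: applying it with the test function $f\circ\Psi\in C_b^1(T_0)$ gives
$$\EE(f,f)=\tt\EE(f\circ\Psi,f\circ\Psi)\le C\sup_{\phi\in T_0}\|\nn(f\circ\Psi)(\phi)\|_{T_0^*}^2.$$
The key analytic step is therefore the chain-rule identity linking the Fr\'echet gradient $\nn(f\circ\Psi)(\phi)$ on the tangent space to the intrinsic derivative $Df(\Psi(\phi))$ on $\scr P_p$. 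Concretely, differentiating $f(\mu_0\circ(\phi+\vv\psi)^{-1})$ in $\vv$ and comparing with the defining relation $f(\mu\circ(\mathrm{id}+\vv\eta)^{-1})$ for the intrinsic derivative at $\mu=\Psi(\phi)=\mu_0\circ\phi^{-1}$, one should obtain
$$\phantom{}_{T_0^*}\<\nn(f\circ\Psi)(\phi),\psi\>_{T_0}=\phantom{}_{T_{\mu,p}^*}\<Df(\mu),\psi\circ\phi^{-1}\>_{T_{\mu,p}}=\int_X {}_{X^*}\<Df(\mu)(\phi(x)),\psi(x)\>_X\,\mu_0(\d x),$$
i.e.~$\nn(f\circ\Psi)(\phi)(x)=Df(\Psi(\phi))(\phi(x))$ as an element of $L^{p^*}(X\to X^*,\mu_0)$. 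Taking norms and using that $\mu_0\circ\phi^{-1}=\mu$ pushes the $\mu_0$-integral over $X$ to a $\mu$-integral, one gets $\|\nn(f\circ\Psi)(\phi)\|_{T_0^*}=\|Df(\Psi(\phi))\|_{T_{\mu,p}^*}\le\sup_{\mu\in\scr P_p}\|Df(\mu)\|_{T_{\mu,p}^*}$, which is exactly the bound required in $(C_1)$ with the same constant $C$.

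For $(C_2)$, I would argue that $(\EE,\D(\EE))$ is, by part $(1)$, the closure of a bilinear form densely defined on $C_b^1(\scr P_p)$ (which contains $\scr FC_b^1(\scr P_p)$, dense in $L^2(\scr P_p,\LL)$ by Lemma \ref{L2}); functions in $C_b^1(\scr P_p)$ are continuous hence quasi-continuous, and $\D(\EE)$ is their $\EE_1$-closure, so a dense subspace of $\D(\EE)$ consists of quasi-continuous functions. (If one prefers, one invokes the remark following Theorem \ref{QRT}, which states precisely that $(C_2)$ holds automatically for closures of forms on $C_b^1(\scr P_p)$.) With both $(C_1)$ and $(C_2)$ in hand, Theorem \ref{QRT} yields quasi-regularity of $(\EE,\D(\EE))$ on $\scr P_p$, completing the proof.

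The main obstacle I anticipate is the rigorous justification of the chain rule $\nn(f\circ\Psi)(\phi)(x)=Df(\Psi(\phi))(\phi(x))$: one must check that $\Psi$ is sufficiently differentiable in the Fr\'echet sense along directions $\psi\in T_0$, that the limit defining $D_\psi(f\circ\Psi)(\phi)$ may be exchanged with the $\mu_0$-integration (uniform integrability via the $C_b^1$ bounds and dominated convergence), and that the resulting functional on $T_0$ is represented by the claimed element of $T_0^*=L^{p^*}(X\to X^*,\mu_0)$ — in particular that $x\mapsto Df(\mu)(\phi(x))$ indeed lies in $L^{p^*}(\mu_0)$, which follows since $Df(\mu)\in L^{p^*}(X\to X^*,\mu)$ and $\mu=\mu_0\circ\phi^{-1}$. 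Everything else is bookkeeping: matching constants, and citing the already-proved Theorem \ref{QRT}.
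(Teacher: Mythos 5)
Within assertion (3), your route coincides with the paper's: verify $(C_1)$ by applying \eqref{QQ} to $f\circ\Psi$ and using the chain rule $\nn(f\circ\Psi)(\phi)=Df(\Psi(\phi))\circ\phi$ with the norm identity $\|\nn(f\circ\Psi)(\phi)\|_{T_0^*}=\|Df(\Psi(\phi))\|_{T_{\Psi(\phi),p}^*}$, verify $(C_2)$ from the fact that $(\EE,\D(\EE))$ is the closure of a form defined on the continuous (hence quasi-continuous) functions $C_b^1(\scr P_p)$, and then invoke Theorem \ref{QRT}. The chain rule you flag as the ``main obstacle'' is exactly the paper's Lemma \ref{lem:chain}, proved there via the representation formula of \cite{BRW} and dominated convergence, so your sketch of it is on the right track.

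The genuine gap is that the statement to be proved consists of three assertions, and your proposal covers only (3), while explicitly using (1) as an input (``recall from part (1) that \dots''). Nothing in the proposal establishes (1): the inclusion \eqref{TU} (which is precisely where the chain rule is needed, together with continuity of $\phi\mapsto\nn(u\circ\Psi)(\phi)$ in $L^1(X\to X^*,\mu_0)$ and the uniform bound required by Definition \ref{def:Cb1T}), the closability of $\EE$ on $C_b^1(\scr P_p)$ in $L^2(\scr P_p,\LL)$, and the identity \eqref{EE'}; the paper obtains these from the theory of image Dirichlet forms (Bouleau--Hirsch, Chapter V), using the isometry \eqref{WW} and the density of $\Psi^*\D(\tt\EE)\supseteq C_b^1(\scr P_p)\supseteq\scr FC_b^1(\scr P_p)$ in $L^2(\scr P_p,\LL)$ from Lemma \ref{L2}, plus the Markovian/closedness transfer. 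Likewise assertion (2) is entirely absent: one needs the discreteness of the spectrum of $L$, the eigenvalue comparison $\ll_n\ge\si_n$ obtained from the Courant--Fischer min-max principle applied through the isometric embedding $u\mapsto u\circ\Psi$ of $\D(\EE)$ into $\D(\tt\EE)$ (using \eqref{EE'}), and the $L^2$-bound on the heat kernel via the spectral representation $p_t(\mu,\nu)=\sum_n \e^{-\ll_n t}u_n(\mu)u_n(\nu)$. As submitted, the proposal is therefore an essentially correct plan for part (3) only, not a proof of the theorem.
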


To prove \eqref{TU}, we need the following chain rule.

\begin{lem}\label{lem:chain}
If $u\in C_b^1(\scr P_p)$, then
$u\circ \Psi\in C_b^1(T_{0})$ with
\begin{equation*}
\nabla (u\circ\Psi)(\phi)=Du(\Psi(\phi))\circ \phi, \quad\phi\in T_{0}.
\end{equation*}
In particular,
\begin{equation*}
\big\|\nabla (u\circ\Psi)(\phi)\big\|_{T_0^*}=\big\|Du(\Psi(\phi))\big\|_{T_{\Psi(\phi),p}^*},\quad \phi\in T_0.
\end{equation*}
\end{lem}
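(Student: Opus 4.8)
The plan is to compute the Fréchet derivative of $u\circ\Psi$ at a point $\phi\in T_0$ directly from the definition of intrinsic differentiability of $u$, and then to check the regularity assertions (boundedness of $\nabla(u\circ\Psi)$ in $L^\infty$, continuity of $\phi\mapsto\nabla(u\circ\Psi)(\phi)$ into $L^1$) that are needed for membership in $C_b^1(T_0)$. The heart of the matter is the identity, for $\phi,\psi\in T_0$,
$$
\Psi(\phi+\psi)=\mu_0\circ(\phi+\psi)^{-1}=\big(\mu_0\circ\phi^{-1}\big)\circ\big(id+\psi\circ\phi^{-1}\big)^{-1}
\quad\text{``morally''},
$$
which lets one relate a perturbation of $\phi$ in $T_0$ to a perturbation of $\Psi(\phi)$ along the direction $id+\varepsilon(\text{something})$ appearing in the definition of $D_\bullet u$. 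More precisely, I would fix $\phi\in T_0$ and $\psi\in T_0$, write $\mu:=\Psi(\phi)$, and observe that $\mu_0\circ(\phi+\varepsilon\psi)^{-1}$ is \emph{not} literally of the form $\mu\circ(id+\varepsilon\eta)^{-1}$ for a fixed $\eta$, so one cannot just quote the definition of $D_\phi u(\mu)$ verbatim; instead one needs the $C^1$-regularity of $u$ (the little-$o$ estimate in Definition \ref{D}) to control the error.

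The key steps, in order: (i) Reduce to showing that $u\circ\Psi$ is Gateaux differentiable at each $\phi$ with derivative $\psi\mapsto\,_{T_0^*}\langle Du(\mu)\circ\phi,\psi\rangle_{T_0}=\int_X\,_{X^*}\langle Du(\mu)(\phi(x)),\psi(x)\rangle_X\,\mu_0(\d x)$, and that this candidate derivative is a bounded linear functional on $T_0$ with norm $\le\|Du(\mu)\|_{T_{\mu,p}^*}$ — indeed, by the change-of-variables $\mu=\mu_0\circ\phi^{-1}$, $\int_X\,_{X^*}\langle Du(\mu)(\phi(x)),\psi(x)\rangle_X\,\mu_0(\d x)$ is exactly $\int_X\,_{X^*}\langle Du(\mu)(y),\bar\psi(y)\rangle_X\,\mu(\d y)$ where $\bar\psi$ is a $\mu$-version of $\psi\circ\phi^{-1}$ (defined $\mu$-a.e. even when $\phi$ is not injective), so $|(\text{candidate})(\psi)|\le\|Du(\mu)\|_{T_{\mu,p}^*}\|\bar\psi\|_{T_{\mu,p}}=\|Du(\mu)\|_{T_{\mu,p}^*}\|\psi\|_{T_0}$, which simultaneously gives the norm identity at the end once one has equality in the relevant Hölder step (using that $Du(\mu)$ realizes its dual norm against a suitable test function composed with $\phi$). (ii) Prove the little-$o$ estimate $u(\Psi(\phi+\psi))-u(\mu)-(\text{candidate})(\psi)=o(\|\psi\|_{T_0})$: write $\Psi(\phi+\psi)=\mu\circ(id+\rho_\psi)^{-1}$ where $\rho_\psi\in T_{\mu,p}$ is the $\mu$-version of $\psi\circ\phi^{-1}$ (so $\|\rho_\psi\|_{T_{\mu,p}}=\|\psi\|_{T_0}$), then apply the $C^1$-property of $u$ at $\mu$ to get $u(\mu\circ(id+\rho_\psi)^{-1})-u(\mu)-D_{\rho_\psi}u(\mu)=o(\|\rho_\psi\|_{T_{\mu,p}})=o(\|\psi\|_{T_0})$, and finally note $D_{\rho_\psi}u(\mu)=\int_X\,_{X^*}\langle Du(\mu)(y),\rho_\psi(y)\rangle_X\,\mu(\d y)=(\text{candidate})(\psi)$ by the same change of variables. (iii) Conclude $u\circ\Psi$ is Fréchet differentiable with $\nabla(u\circ\Psi)(\phi)(x)=Du(\mu)(\phi(x))\in X^*$ for $\mu_0$-a.e. $x$, and that $\|\nabla(u\circ\Psi)(\phi)(x)\|_{X^*}=\|Du(\mu)(\phi(x))\|_{X^*}\le\|Du\|_{L^\infty(\scr P_p\times X\to X^*)}$ uniformly in $\phi,x$, giving the $L^\infty$-bound. (iv) Continuity of $\phi\mapsto\nabla(u\circ\Psi)(\phi)$ into $L^1(X\to X^*,\mu_0)$: if $\phi_n\to\phi$ in $T_0$, then along a subsequence $\phi_n\to\phi$ $\mu_0$-a.e., hence $\Psi(\phi_n)\to\Psi(\phi)$ in $\W_p$ (Lipschitz continuity of $\Psi$ noted in the text), and so $Du(\Psi(\phi_n))(\phi_n(x))\to Du(\Psi(\phi))(\phi(x))$ for $\mu_0$-a.e. $x$ by joint continuity of the version $g$ of $Du$ from Definition \ref{D}; the uniform $L^\infty$-bound from (iii) then gives $L^1(\mu_0)$-convergence by dominated convergence, and a subsequence argument upgrades this to convergence of the whole sequence. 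The norm identity $\|\nabla(u\circ\Psi)(\phi)\|_{T_0^*}=\|Du(\Psi(\phi))\|_{T_{\Psi(\phi),p}^*}$ is then immediate from $\|\nabla(u\circ\Psi)(\phi)(\cdot)\|_{X^*}^{p^*}=\|Du(\mu)(\phi(\cdot))\|_{X^*}^{p^*}$ integrated against $\mu_0$ together with the pushforward $\mu=\mu_0\circ\phi^{-1}$ (with the usual adjustment when $p=1$, replacing $L^{p^*}$-norms by $L^\infty$-norms and pushforward of essential suprema).

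The main obstacle, and the only genuinely delicate point, is handling the case where $\phi$ is \emph{not injective}, so that $\psi\circ\phi^{-1}$ is not a well-defined function on $X$ and $Du(\mu)\circ\phi$ need only make sense $\mu_0$-a.e. rather than pointwise: one must argue that for $\psi\in T_0=L^p(X\to X,\mu_0)$ there is a well-defined $\rho_\psi\in T_{\mu,p}=L^p(X\to X,\mu)$ with $\rho_\psi\circ\phi=\psi$ $\mu_0$-a.e. — equivalently that $\psi$ is (a.e. equal to) a measurable function of $\phi$ — and that the map $\psi\mapsto\rho_\psi$ is a linear isometry from $T_0$ \emph{onto} a closed subspace of $T_{\mu,p}$ (onto all of $T_{\mu,p}$ is not needed). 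This is not automatic: it requires that $\psi$ be measurable with respect to $\phi^{-1}(\scr B(X))$, which is \emph{false} for general $\psi\in T_0$. The correct resolution is that one only needs the identity $\nabla(u\circ\Psi)(\phi)=Du(\mu)\circ\phi$ as an element of $L^{p^*}(X\to X^*,\mu_0)$, and for the little-$o$ estimate one should instead write the increment as $u\big(\mu_0\circ(\phi+\psi)^{-1}\big)-u\big(\mu_0\circ\phi^{-1}\big)$ and push forward by $\phi$ differently: set $\tilde\mu:=\mu_0\circ(\phi,\psi)^{-1}$ on $X\times X$ with $(x,y)\mapsto x$ pushing to $\mu$ and $(x,y)\mapsto y-x$ (after identifying) not directly usable — so in practice one approximates $\psi$ by $\phi$-measurable functions, or more cleanly one works with the \emph{disintegration} of $\mu_0$ over $\mu$ along $\phi$ and interprets $Du(\mu)\circ\phi$ via that disintegration, estimating $|u(\mu_0\circ(\phi+\varepsilon\psi)^{-1})-u(\mu)-\varepsilon\int_X\langle Du(\mu)(\phi(x)),\psi(x)\rangle\mu_0(\d x)|$ by a Hölder estimate that only uses $\|Du(\mu)\|_{T_{\mu,p}^*}$ and $\|\psi\|_{T_0}$ plus the $C^1$-modulus of $u$ evaluated along the curve $\varepsilon\mapsto\mu_0\circ(\phi+\varepsilon\psi)^{-1}$, which is $\W_p$-Lipschitz with constant $\|\psi\|_{T_0}$. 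I would devote the bulk of the written proof to making this measurability/disintegration bookkeeping precise; everything else is routine dominated convergence.
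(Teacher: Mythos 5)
There is a genuine gap, and it sits exactly where you located it. Your candidate derivative, the $L^\infty$-bound in step (iii) and the continuity argument in step (iv) (joint continuity and boundedness of $Du$ plus dominated convergence, with the norm identity by pushing forward the $L^{p^*}$-norm under $\mu=\mu_0\circ\phi^{-1}$ -- no H\"older-equality argument is even needed there) all match the paper. But the core differentiability step (ii) is not proved. Writing $\Psi(\phi+\psi)=\mu\circ(id+\rho_\psi)^{-1}$ with $\rho_\psi\circ\phi=\psi$ is, as you yourself note, impossible for general $\psi\in T_0$ that are not $\sigma(\phi)$-measurable, and Definition \ref{D} only controls increments of the form $\mu\circ(id+\eta)^{-1}$ with $\eta\in T_{\mu,p}$ based at the \emph{same} measure $\mu$. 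The remedies you sketch do not close this: the natural approximation $\psi_n:=\E_{\mu_0}[\psi\,|\,\sigma(\phi)]$ produces an error $|u(\Psi(\phi+\psi))-u(\Psi(\phi+\psi_n))|\lesssim \|\psi-\psi_n\|_{T_0}$, which is of order $\|\psi\|_{T_0}$ rather than $o(\|\psi\|_{T_0})$, so the little-$o$ estimate is lost; and the ``$C^1$-modulus of $u$ along the curve $\vv\mapsto\mu_0\circ(\phi+\vv\psi)^{-1}$'' is not a quantity furnished by Definition \ref{D}, precisely because the intermediate measures on that curve are not transport perturbations of $\mu$. Deferring ``the bulk of the written proof'' to this bookkeeping leaves the main analytic content of the lemma unproved.

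The paper resolves this by quoting an external result, \cite[Theorem 2.1]{BRW}, which gives the chain rule along segments in the lift for \emph{arbitrary} directions $\xi\in T_0$: it yields
$(u\circ\Psi)(\phi+\xi)-(u\circ\Psi)(\phi)=\int_0^1\,_{T_0^*}\langle Du(\Psi(\phi+\vv\xi))\circ(\phi+\vv\xi),\xi\rangle_{T_0}\,\d\vv$,
after which Fr\'echet differentiability with derivative $Du(\Psi(\phi))\circ\phi$ follows from the joint continuity and boundedness of $Du$ via dominated convergence, and the rest is as in your steps (iii)--(iv). So the missing ingredient in your proposal is exactly that cited theorem (or an equivalent amount of work reproving it); without it, the passage from intrinsic differentiability at each measure to differentiability of the lift in non-$\sigma(\phi)$-measurable directions is not routine and your argument as written does not establish it.
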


\begin{proof}
Let $\phi,\xi\in T_0$. On the probability space $(X,\scr B(X),\mu_0)$   \cite[Theorem 2.1]{BRW} implies
\beg{align*} & (u\circ\Psi)(\phi+ \xi)-(u\circ\Psi)(\phi)= \int_0^1\ff{\d}{\d\vv} (u\circ\Psi)(\phi+ \vv\xi)\,\d\vv\\
&= \int_0^1 \phantom{}_{T_0^*}{\big\langle} Du(\Psi(\phi+\vv\xi))\circ(\phi+\vv\xi),
\xi{\big\rangle}_{T_0}.\end{align*}
Combining this with the boundedness and continuity of $Df$ on $\scr P_p\times X$, we may apply the dominated convergence theorem to deduce
\begin{equation}\label{eqn:fd2}
\lim_{\|\xi\|_{T_0}\downarrow   0} \bigg|\frac{(u\circ\Psi)(\phi+ \xi)-(u\circ\Psi)(\phi)- \phantom{}_{T_0^*}{\big\langle} Du(\Psi(\phi))\circ\phi,
\xi{\big\rangle}_{T_0}} {\|\xi\|_{T_0}} \bigg| =0,
\end{equation}
hence  $u\circ\Psi$ is Fr\'echet differentiable on $T_{0}$ with derivative
\begin{equation}\label{eqn:chainR}
\nabla (u\circ \Psi)(\phi)=Du(\Psi(\phi))\circ\phi\in T_0^*
\end{equation} satisfying
$$\|\nn  (u\circ \Psi)(\phi)\|_{L^\infty(X\to X^*,\mu_0)}\le \|Du\|_\infty<\infty.$$

Finally, let $\{\phi_n\}_n\subseteq T_0$ and $\phi\in T_0$ such that as $n\to\infty$,
$$ \|\phi_n-\phi\|_{T_0}:=\bigg(\int_{X}\|\phi_n-\phi\|_{X}^p\d\mu_0\bigg)^{\ff 1 p}\to 0.$$
By the continuity of $\Psi: T_0\to \scr P_p$, and  the boundedness and continuity of   $Du:\scr P_p\times X\to X^*$, we may apply  the dominated convergence theorem to derive
$$\lim_{n\to\infty} \int_X\|Du(\Psi(\phi_n))(\phi_n)- Du(\Psi(\phi))(\phi)\|_{X^*} \d\mu_0=0,$$ which
together with \eqref{eqn:chainR} yields the continuity of $T_0\ni\phi\mapsto \nn u(\phi)\in L^1(X\to X^*,\mu_0)$.

\end{proof}

\begin{proof}[Proof of Theorem \ref{TN}] (1) The  inclusion  \eqref{TU} is ensured by \eqref{WW} and Lemma \ref{lem:chain}. Next, by Lemma \ref{L2} and $C_b^1(\scr P_p)\supseteq \scr FC_b^1(\scr P_p)$, $C_b^1(\scr P_p)$ is dense in $L^2(\scr P_p,\LL)$, which together with \eqref{TU} and $\D(\tt\EE)\supseteq C_b^1(T_{0})$ implies that
$$\Psi^*\D(\widetilde\EE):=\big\{u\,:\,u\circ\Psi\in\D(\widetilde\EE)\big\}\supseteq C_b^1(\scr P_p)$$
is a dense subset of $L^2(\scr P_p,\LL).$ So, by
\cite[Chapt.~V]{bou91},
$$
\Psi^*\widetilde\EE(u,v):=\widetilde\EE(u\circ\Psi,v\circ\Psi),\ \  u,v \in\Psi^*\D(\widetilde\EE)
$$ is  a Dirichlet form in $L^2(\scr P_p,\LL)$. Since $\EE(u,v)=\widetilde\EE(u\circ\Psi,v\circ\Psi)$ for $u,v\in C_b^1(\scr P_p)$, the bilinear form
$(\EE, C_b^1(\scr P_p))$ is densely defined and closable  in $L^2(\scr P_p,\LL)$. Moreover, 
its closure $(\EE,\D(\EE))$ is a Dirichlet form, as follows from the Markovian property of $\Psi^*\tilde\EE$ together with $\tau\circ u\in C_b^1(\scr P_p)$ for all $\tau\in C_b^1(\R)$ and $u\in C_b^1(\scr P_p)$.

(2)    As shown in  the proof of \cite[Theorem 3.2]{RW22},  if $\widetilde L$ has purely discrete spectrum, then Theorem \ref{T2}(2)
implies that so does $L$. Moreover,  \eqref{EE'} and the Courant-Fisher min-max principle, for any $n\in\N$,
\beg{align*} \ll_n&= \inf_{\C: n\text{-dim.\  subspace\ of \ }\D(\EE)}\   \sup_{0\ne u \in \C} \ff{\EE(u,u)}{\LL(u^2)}  \\
&=\inf_{\C: n\text{-dim.\  subspace\ of \ }\D( \EE)} \  \sup_{0\ne u \in \C} \ff{\widetilde\EE(u\circ \Psi,u\circ\Psi)}{\LL_0((u\circ\Psi)^2)}\\
&\ge \inf_{\tt\C: n\text{-dim.\  subspace\ of \ }\D(\widetilde \EE)} \  \sup_{0\ne \tt u \in \tt\C} \ff{\widetilde \EE(\tt u,\tt u)}{\LL_0(\tt u^2)}=\sigma_n.   \end{align*}
So, if
$\sum_{n=1}^\infty \e^{-2\sigma_n t} <\infty$ for $t>0,$
by the spectral representation (see for instance \cite{Davies}),  $P_t$ has   heat kernel $p_t$ with respect  to $\LL$ such that
$$p_t(\mu,\nu):=\sum_{n=1}^\infty \e^{-\ll_n t} u_n(\mu)u_n(\nu), \quad \mu,\nu\in \scr P_p,$$
and hence
$$\int_{\scr P_p\times\scr P_p} p_t(\mu,\nu)^2 \,\d\LL(\d\mu)\,\d\LL(\d\nu)= \sum_{n=1}^\infty \e^{-2\ll_n t}
<\infty.$$

(3) Since $C_b^1(\scr P_p)$ is a dense subspace of $\D(\EE)$, the proof of (1) implies Condition  $(C_2)$ in Theorem \ref{QRT}. Moreover, Lemma \ref{lem:chain}, \eqref{QQ} and \eqref{WW} imply $(C_1)$. So,  the quasi-regularity of $(\EE,\D(\EE))$ follows from   Theorem \ref{QRT}.\end{proof}

\subsection{Local  Dirichlet forms and diffusion processes}\label{diffusion}

\noindent In classic theory, the Dirichlet form for a symmetric diffusion process on $\R^d$ is of gradient type
$$\EE(f,g)=\int_{\R^d}\<Q\nn f,\nn g\>_{\R^d}\,\d\LL$$
for a nice probability measure $\LL$ on $\R^d$ and a diffusion coefficient $Q=(q_{ij})_{1\le i,j\le d}$.

In the following, we consider the case $X=H$ for a separable Hilbert space $H$ and develop an analogous concept for the state space $\scr P_p$, $p\in[1,2]$. 
First, we fix a symmetric bounded linear operator $Q$ on $L^2(T_{0}\to T_{\mu_0,2},\LL_0)$ such that there is a decomposition $Q=\{Q_\phi\}_{\phi\in T_0}$ with $Q_\phi:T_{\mu_0,2}\to T_{\mu_0,2}$ and
\begin{equation}\label{eq:dec}
	QV(\phi):=Q_\phi (V(\phi))\quad\text{for}\quad V\in L^2(T_{0}\to T_{\mu_0,2},\LL_0).
\end{equation}
By Definition \ref{def:Cb1T} we have $Q\nabla f\in L^2(T_{0}\to T_{\mu_0,2},\LL_0)$ for $f\in C_b^1(T_0)$. 
We assume that  
\begin{equation}\label{eq:preim}
	\tt\EE(f,g)=\int_{T_0}\langle Q_\phi(\nabla f(\phi)),\nabla g(\phi)\rangle_{T_{\mu_0,2}}\d\LL_0(\phi),\qquad f,g\in C_b^1(T_0),
\end{equation}
is a well-defined and closable bilinear form in $L^2(T_0,\LL_0)$.
Let
\begin{equation}\label{image}
\EE(u,v):=\tt\EE(u\circ\Psi,v\circ\Psi),\quad u,v\in C_b^1(\scr P_p).
\end{equation}

\begin{rem}\label{rem:image}
	The form in \eqref{image} is closable in $L^2(\scr P_p,\LL)$ and its closure $(\EE,\D(\EE))$ is quasi-regular by Theorem \ref{T2}.
	Moreover, we note that
	\begin{equation*}
L^{p^*}(H\to H,\mu)\subseteq L^2(H\to H,\mu)\subseteq L^p(H\to H,\mu),
	\end{equation*}
	i.e.~$T_{\mu,p}^*\subseteq T_{\mu,2}\subseteq T_{\mu,p}$ for $\mu\in\scr P_p$, since $p\in[1,2]$.
\end{rem}

\beg{thm}\label{T2} If \eqref{eq:preim} is well-defined and closable in $L^2(T_0,\LL_0)$, then
$(\EE,\D(\EE))$ as defined in \eqref{image} and Remark \ref{rem:image} is a local, quasi-regular Dirichlet form in $L^2(\scr P_p,\LL)$. Moreover, there are bounded symmetric operators $B_\mu:T_{\mu,2}\to T_{\mu,2}$, $\mu\in\scr P_p$,
such that the representation
\begin{equation*}
	\EE(u,v)=\int_{\scr P_p}\Gamma (u,v)(\mu)\d\LL(\mu)\quad\text{for }u,v\in C_b^1(\scr P_p)
\end{equation*}
holds with square-field operator 
\begin{equation}\label{eq:DefG}
	\Gamma(u,v)(\mu)=\langle D u(\mu),B_\mu( D v(\mu))\rangle_{T_{\mu,2}}.
\end{equation}
\end{thm}
\beg{proof} 
In part (a) we show the claimed representation for $\EE$. In (b) the local property of $\EE$ is addressed.

(a) The direct integral ${\textstyle\int}^\oplus_{\scr P_p} T_{\mu,2}\d\LL(\mu)$ contains all $\LL$-classes of measurable vector fields $V(\mu)\in T_{\mu,2}$, $\mu\in\scr P_p$, such that
$\int_{\scr P_p} \|V(\mu)\|_{T_\mu,2}^2\d\LL(\mu)<\infty$ (see \cite{Dixmier, Taka}). The inner product is given by
	\sloppy $\int_{\scr P_p} {\langle V(\mu),W(\mu)\rangle}_{T_\mu,2}^2\d\LL(\mu)$ for $V,W\in {\textstyle\int}^\oplus_{\scr P_p} T_{\mu,2}\d\LL(\mu)$.
The pull-back operator 
\begin{equation*}
	PV(\phi):=V(\Psi(\phi))\circ\phi,\quad \phi\in T_0,
\end{equation*}
linearly maps an element $V\in {\textstyle\int}^\oplus_{\scr P_p} T_{\mu,2}\d\LL(\mu)$ to $L^2(T_0\to T_{\mu_0,2},\LL_0)$.
It has an adjoint operator $P^*:L^2(T_0\to  T_{\mu_0,2},\LL_0)\to{\textstyle\int}^\oplus_{\scr P_p} T_{\mu,2}\d\LL(\mu)$
and we can define $B:=P^*QP$, a symmetric, bounded linear operator on ${\textstyle\int}^\oplus_{\scr P_p} T_{\mu,2}\d\LL(\mu)$.
For $V,W\in{\textstyle\int}^\oplus_{\scr P_p} T_{\mu,2}\d\LL(\mu)$ and $\xi\in C_b(\scr P_2)$, by definition
\begin{align*}
	&\int_{\scr P_p}\langle B(\xi V)(\mu),W(\mu)\rangle_{T_{\mu,2}}\d\LL(\mu)=\int_{T_0}\langle QP(\xi V)(\phi),PW(\phi)\rangle_{T_{\mu_0,2}}\d\LL_0(\phi)\\
	&=\int_{T_0}\big\langle Q_\phi \big[\xi(\mu_0\circ\phi^{-1}) V(\mu_0\circ\phi^{-1})\circ\phi\big],W(\mu_0\circ\phi^{-1})\circ\phi \big\rangle_{T_{\mu_0,2}}\d\LL_0(\phi)\\
	&=\int_{T_0} \xi(\mu_0\circ\phi^{-1})\big\langle Q_\phi \big[ V(\mu_0\circ\phi^{-1})\circ\phi\big],W(\mu_0\circ\phi^{-1})\circ\phi \big\rangle_{T_{\mu_0,2}}\d\LL_0(\phi)\\
	&=\int_{T_0}\langle Q(PV)(\phi),P(\xi W)(\phi)\rangle_{T_{\mu_0,2}}\d\LL_0(\phi)=
	\int_{\scr P_p}\langle \xi(\mu) B V(\mu),W(\mu)\rangle_{T_{\mu,2}}\d\LL(\mu),
\end{align*}
i.e.~$B$ commutes with multiplication by $\xi$.
Hence, there exists a decomposition $B=\{B_\mu\}_{\mu\in\scr P_2}$, where $B_\mu: T_{\mu,2}\to T_{\mu,2}$ is a symmetric bounded operator (see \cite[Sect.~II.2.5]{Dixmier}) and
\begin{equation*} 
	BV(\mu)=B_\mu (V(\mu)),\qquad  \mu\in\scr P_p,\,V\in{\textstyle\int}^\oplus_{\scr P_p} T_{\mu,2}\d\LL(\mu).
\end{equation*}
Using Lemma \ref{lem:chain} together with the above equation for the choices $\xi=1$, $V=Du$, $W=Dv$ for given $u,v\in C_b^1(\scr P_2)$, we obtain
\begin{multline*}
	\EE(u,v)=\tt\EE(u\circ\Psi,v\circ\Psi)=\int_{T_{0}}\langle QPV,PW\rangle_{T_{\mu_0,2}}\d\LL_0
	\\=\int_{\scr P_p}\langle B(Du)(\mu),Dv(\mu)\rangle_{T_{\mu,2}}\d\LL(\mu)=\int_{T_0}\langle B_\mu (Du(\mu)),Dv(\mu)\rangle_{T_{\mu,2}}\d\LL(\mu).
\end{multline*}
The fact that $\Gamma$ in \eqref{eq:DefG} is indeed the square-field operator of $\EE$ in the sense \cite[Sect.~I.4]{bou91} follows easily through
\begin{multline*}
	\Gamma(u,v)(\Psi(\phi))=\langle Q_\phi Du(\Psi(\phi))\circ\phi, Dv(\Psi(\phi))\circ\phi \rangle_{T_{\mu_0,2}}
	\\=\langle Q_\phi \nabla(u\circ\Psi)(\phi), \nabla(u\circ\Psi)(\phi) \rangle_{T_{\mu_0,2}}
	=:\tilde\Gamma(u\circ\Psi,v\circ\Psi)(\phi)
\end{multline*}
for $\phi\in T_0$, $u,v\in C_b^1(\scr P_2)$ and the corresponding property for $\tilde\Gamma$ in relation to $\tt\EE$.

(b) To prove the locality of  $(\EE, \D(\EE))$,   it suffices to show that
\beq\label{FA}   {\rm supp} [u\circ \Psi]  \subseteq   \Psi^{-1} \big(
{\rm supp}[u]\big)
\quad \text{for }u\in L^2(\scr P_p,\LL).\end{equation}
If so, then ${\rm supp} [u]\cap {\rm supp} [v]=\emptyset$, $u,v\in\D(\EE)$, implies ${\rm supp}[u\circ \Psi]\cap{\rm supp} [v\circ \Psi]=\emptyset,$ so that  \eqref{EE'} and   the local property of $(\widetilde \EE, \D(\widetilde\EE))$  yield
$$\EE(u,v)=  \widetilde \EE (u\circ\Psi, v\circ\Psi)=0.$$
Let $\phi\in T_0\setminus \Psi^{-1}(\supp[u])$. 
There exists an open neighborhood $U\subseteq \scr P_p$ of $\Psi(\phi)$ such that 
$$0=\int_U |u|\,\d\LL= \int_{\Psi^{-1}(U)} |u\circ \Psi|\,\d\LL_0.$$
So, $\phi\in T_0 \setminus  {\rm supp} [u\circ\Psi]$ follows from continuity of $\Psi$ and $\phi\in \Psi^{-1}(U)$.
\end{proof}

We now define the notion of directional derivatives on $\scr P_p$.

\begin{rem}\label{rem:sect}
	Let $Q:L^2(T_{\mu_0}\to T_{\mu_0,2},\LL_0)\to L^2(T_{\mu_0}\to T_{\mu_0,2},\LL_0)$ be decomposable as in \eqref{eq:dec} with
	\begin{equation*}
		Q_\phi:= {\langle \eta,\,\cdot\, \rangle}_{T_{\mu_0,2}}\eta,\qquad \phi\in T_0,
	\end{equation*}
	for some fixed $\eta\in T_{\mu_0,2}$. With $P$ and adjoint $P^*$ as defined in the proof of Theorem \ref{T2}, the definition $B:=P^*QP$ yields a symmetric bounded decomposable operator
	$B={\{B_\mu\}}_{\mu\in\scr P_p}$ on ${\textstyle\int}^\oplus_{\scr P_p} T_{\mu,2}\d\LL(\mu)$ and the range of $B_\mu$ is smaller equal to $1$. Hence, there exists 
	an element $V\in {\textstyle\int}^\oplus_{\scr P_p} T_{\mu,2}\d\LL(\mu)$, which we denote by $V:={(\eta_\mu)}_{\mu\in \scr P_p}$, such that
	\begin{equation}\label{eq:Beta}
		BW(\mu)=\langle W(\mu),\eta_\mu\rangle_{T_{\mu,2}}\eta_\mu,\qquad \mu\in\scr P_p,\, W\in  {\textstyle\int}^\oplus_{\scr P_p} T_{\mu,2}\d\LL(\mu).
	\end{equation}
\end{rem}
	
\begin{lem}\label{lem:partial}Let $\eta$, ${(\eta_\mu)}_{\mu\in \scr P_p}$ be as in Remark \ref{rem:sect} and $u\in C_b^1(\scr P_1)$. For $\LL_0$-a.e.~$\phi\in T_0$ it holds
	\begin{equation*}
		\partial_\eta (u\circ\Psi)(\phi)= (D_{\eta_{\Psi(\phi)}}u)(\Psi(\phi)).
	\end{equation*}
	Moreover, for $\xi\in T_{\Psi(\phi),2}$ and $\LL_0$-a.e.~$\phi\in T_0$ it holds
	\begin{equation*}
		\langle \eta,\xi\circ\phi\rangle_{T_{\mu_0,2}}=\langle \eta_{\Psi(\phi)}\circ\phi,\xi\circ\phi\rangle_{T_{\mu_0,2}}.
	\end{equation*}
\end{lem}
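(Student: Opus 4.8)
The plan is to unravel the definitions of $\Psi$, the chain rule from Lemma~\ref{lem:chain}, and the decomposition of $B=P^*QP$ from the proof of Theorem~\ref{T2}, then read off both identities essentially by inspection. First I would fix $u\in C_b^1(\scr P_1)$ and a representative $\phi\in T_0$ at which all the relevant $\LL_0$-a.e.\ statements hold. By Lemma~\ref{lem:chain} the map $u\circ\Psi$ is Fr\'echet differentiable on $T_0$ with $\nabla(u\circ\Psi)(\phi)=Du(\Psi(\phi))\circ\phi\in T_0^*$, so for the fixed direction $\eta\in T_{\mu_0,2}$,
\begin{equation*}
	\partial_\eta(u\circ\Psi)(\phi)={}_{T_0^*}\langle Du(\Psi(\phi))\circ\phi,\eta\rangle_{T_0}=\int_X {}_{X^*}\langle Du(\Psi(\phi))(\phi(x)),\eta(x)\rangle_X\,\mu_0(\d x).
\end{equation*}
(Here I should note that $p=1$, $p^*=\infty$, so pairing $Du(\Psi(\phi))\in L^\infty(X\to X^*,\Psi(\phi))$ against $\eta\circ\phi\in L^1$ makes sense; when $X=H$ is Hilbert the identification $T_{\mu,2}\cong T_{\mu,2}^*$ lets me write everything with $\langle\cdot,\cdot\rangle_{T_{\mu_0,2}}$ and $\langle\cdot,\cdot\rangle_{T_{\mu,2}}$.) The crux is then to recognise the right-hand side as $(D_{\eta_{\Psi(\phi)}}u)(\Psi(\phi))=\int_X {}_{X^*}\langle Du(\Psi(\phi))(x),\eta_{\Psi(\phi)}(x)\rangle_X\,\mu(\d x)$ with $\mu=\Psi(\phi)=\mu_0\circ\phi^{-1}$, and this is exactly the content of the second (change-of-variables type) identity to be proved, applied to the vector field $\xi:=Du(\Psi(\phi))$ — or rather its Riesz representative in $T_{\mu,2}$. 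So the two assertions are really one, and proving the second one first is the efficient route.

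For the second identity, the plan is to use that $\eta_\mu$ is, by construction in Remark~\ref{rem:sect}, the image of the decomposable operator $B=P^*QP$ with $Q_\phi=\langle\eta,\cdot\rangle_{T_{\mu_0,2}}\eta$. Concretely, for any $W\in\int^\oplus_{\scr P_p}T_{\mu,2}\,\d\LL(\mu)$ and any bounded measurable scalar $\xi$ on $\scr P_p$, the computation in part~(a) of the proof of Theorem~\ref{T2} gives
\begin{equation*}
	\int_{\scr P_p}\xi(\mu)\langle BW(\mu),W(\mu)\rangle_{T_{\mu,2}}\,\d\LL(\mu)=\int_{T_0}\xi(\Psi(\phi))\big\langle Q_\phi\big(W(\Psi(\phi))\circ\phi\big),W(\Psi(\phi))\circ\phi\big\rangle_{T_{\mu_0,2}}\,\d\LL_0(\phi).
\end{equation*}
Plugging in $Q_\phi=\langle\eta,\cdot\rangle\eta$ and $BW(\mu)=\langle W(\mu),\eta_\mu\rangle_{T_{\mu,2}}\eta_\mu$ from \eqref{eq:Beta}, the left side becomes $\int\xi(\mu)\,|\langle W(\mu),\eta_\mu\rangle_{T_{\mu,2}}|^2\,\d\LL(\mu)$ and the right side becomes $\int\xi(\Psi(\phi))\,|\langle\eta,W(\Psi(\phi))\circ\phi\rangle_{T_{\mu_0,2}}|^2\,\d\LL_0(\phi)$; pushing the latter forward along $\Psi$ and using that $\xi$ is arbitrary, I get $|\langle W(\mu),\eta_\mu\rangle_{T_{\mu,2}}|$ equals (the $\Psi$-average of) $|\langle\eta,W(\mu)\circ\phi\rangle_{T_{\mu_0,2}}|$. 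A polarization argument (replacing $W$ by $W+tW'$ and comparing, or simply doing the same computation with the bilinear form $\langle B(W),W'\rangle$) upgrades this to the bilinear statement $\langle W(\mu),\eta_\mu\rangle_{T_{\mu,2}}=\langle\eta,W(\mu)\circ\phi\rangle_{T_{\mu_0,2}}$ for $\LL_0$-a.e.\ $\phi$. Finally, since $\eta_\mu\in T_{\mu,2}$ and $\phi$ is the transport map, $\langle W(\mu),\eta_\mu\rangle_{T_{\mu,2}}=\int_X\langle W(\mu)(\phi(x)),\eta_\mu(\phi(x))\rangle\,\mu_0(\d x)=\langle W(\mu)\circ\phi,\eta_\mu\circ\phi\rangle_{T_{\mu_0,2}}$, which rearranges to $\langle\eta,W(\mu)\circ\phi\rangle_{T_{\mu_0,2}}=\langle\eta_{\Psi(\phi)}\circ\phi,W(\mu)\circ\phi\rangle_{T_{\mu_0,2}}$; taking $W=\xi$ (with $\xi\in T_{\Psi(\phi),2}$ viewed as the constant-in-$\mu$ field) gives exactly the displayed second identity.

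Then the first identity follows: apply the second identity with $\xi$ the $T_{\mu,2}$-vector $B_{\Psi(\phi)}$-representative of $Du(\Psi(\phi))$ — more simply, observe that $\partial_\eta(u\circ\Psi)(\phi)=\langle\eta, \nabla(u\circ\Psi)(\phi)^\sharp\rangle_{T_{\mu_0,2}}=\langle\eta, (Du(\Psi(\phi)))^\sharp\circ\phi\rangle_{T_{\mu_0,2}}$ by Lemma~\ref{lem:chain} (where $\sharp$ denotes Riesz identification in the Hilbert setting), and then by the second identity this equals $\langle\eta_{\Psi(\phi)}\circ\phi,(Du(\Psi(\phi)))^\sharp\circ\phi\rangle_{T_{\mu_0,2}}=\langle\eta_{\Psi(\phi)},(Du(\Psi(\phi)))^\sharp\rangle_{T_{\Psi(\phi),2}}=(D_{\eta_{\Psi(\phi)}}u)(\Psi(\phi))$, using the change-of-variables formula $\int_X f(\phi(x))\,\mu_0(\d x)=\int_X f(y)\,\mu(\d y)$ for $\mu=\mu_0\circ\phi^{-1}$ and the definition of the directional intrinsic derivative $D_\psi u(\mu)={}_{T_{\mu,p}^*}\langle Du(\mu),\psi\rangle_{T_{\mu,p}}$. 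The main obstacle I anticipate is keeping the function-space bookkeeping honest: matching the $L^\infty$/$L^1$ pairing (from $p=1$, where the intrinsic derivative lives in $L^\infty$) against the $L^2$/$L^2$ pairing on the tangent space used to define $B$ and $\eta_\mu$ — one has to check that $Du(\Psi(\phi))$, being bounded, indeed lies in $T_{\Psi(\phi),2}^*\cong T_{\Psi(\phi),2}$ so that the Hilbert-space manipulations of Remark~\ref{rem:sect} apply — and verifying that the ``$\LL_0$-a.e.'' qualifiers can be chosen uniformly enough that the pointwise chain rule of Lemma~\ref{lem:chain} and the a.e.\ identity \eqref{eq:Beta} hold simultaneously at the same $\phi$. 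Neither is deep, but both need to be stated carefully.
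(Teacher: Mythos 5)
Your reduction of the first identity to the second via the chain rule of Lemma \ref{lem:chain} is fine, and it is essentially how the paper concludes as well (modulo the caveat, which you do flag, that the exceptional null set must be chosen independently of $\xi$ before you may plug in $\xi=Du(\Psi(\phi))$, which itself depends on $\phi$). The genuine gap is in your proof of the second identity. The computation from part (a) of the proof of Theorem \ref{T2} tests the two integrands only against functions of the form $\xi\circ\Psi$, i.e.\ against $\sigma(\Psi)$-measurable test functions. Hence what you actually obtain, after polarization, is only the conditional-expectation identity
\begin{equation*}
\langle W(\Psi(\phi)),\eta_{\Psi(\phi)}\rangle_{T_{\Psi(\phi),2}}\,\langle W'(\Psi(\phi)),\eta_{\Psi(\phi)}\rangle_{T_{\Psi(\phi),2}}
=\E_{\LL_0}\Big[\langle\eta,PW\rangle_{T_{\mu_0,2}}\,\langle\eta,PW'\rangle_{T_{\mu_0,2}}\,\Big|\,\sigma(\Psi)\Big](\phi)
\end{equation*}
for $\LL_0$-a.e.\ $\phi$: polarization removes the squares, but it does not remove the ``$\Psi$-average''. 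The function $\phi\mapsto\langle\eta,W(\Psi(\phi))\circ\phi\rangle_{T_{\mu_0,2}}$ is not $\sigma(\Psi)$-measurable in general (two maps $\phi\neq\phi'$ with $\mu_0\circ\phi^{-1}=\mu_0\circ\phi'^{-1}$ pair differently with $\eta$), so pointwise a.e.\ equality of the integrands cannot be extracted from integral identities against $\Psi$-measurable test functions. Indeed, from the form identity alone the most you can conclude is $\langle\eta,W(\Psi(\phi))\circ\phi\rangle_{T_{\mu_0,2}}=c(\phi)\,\langle W(\Psi(\phi)),\eta_{\Psi(\phi)}\rangle_{T_{\Psi(\phi),2}}$ with a $\phi$-dependent factor satisfying $\E_{\LL_0}[c^2\mid\sigma(\Psi)]=1$ -- for instance a $\phi$-dependent sign -- and your argument has no means to exclude this.

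The paper closes exactly this gap by arguing at the level of vector-valued functions rather than scalar forms: it evaluates the operator identity $PB=PP^*QP$ pointwise in $L^2(T_0\to T_{\mu_0,2},\LL_0)$, which gives $\langle\eta,PV(\phi)\rangle_{T_{\mu_0,2}}(PP^*)_\phi\eta=\langle V(\Psi(\phi)),\eta_{\Psi(\phi)}\rangle_{T_{\Psi(\phi),2}}\,\eta_{\Psi(\phi)}\circ\phi$ for a.e.\ $\phi$, and combines this with the identification of $(PP^*)_\phi$ as the orthogonal projection of $T_{\mu_0,2}$ onto the $\sigma(\phi)$-measurable fields to deduce $(PP^*)_\phi\eta=\eta_{\Psi(\phi)}\circ\phi$ (after a w.l.o.g.\ sign normalization of $\eta_\mu$, a point your proposal also skips). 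Both assertions of the lemma are then read off from this single fact, the first one by the same chain-rule computation you propose. So the missing ingredient in your plan is precisely this pointwise, vector-valued use of the structure of the projection $PP^*$ (equivalently, of the range of $P$); the integrated quadratic-form identity you rely on is strictly weaker and does not localize in $\phi$ beyond its $\sigma(\Psi)$-average.
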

\begin{proof}
	Let $Q$, $B$, $P$ and $P^*$ be as in Remark \ref{rem:sect}. The constant vector field $T_0\mapsto \eta\in T_{\mu_0,2}$ is denoted again by $\eta$. 
	Since $P:{\textstyle\int}^\oplus_{\scr P_p} T_{\mu,2}\d\LL(\mu)\to L^2(T_{\mu_0}\to T_{\mu_0,2},\LL_0)$ is isometric, 
	$P^*P$ coincides with the identity on ${\textstyle\int}^\oplus_{\scr P_p} T_{\mu,2}\d\LL(\mu)$ and hence
	$PP^*$ coincides with the orthogonal projection in
	$L^2(T_{\mu_0}\to T_{\mu_0,2},\LL_0)$ onto the range of $P$. Moreover, the  operator $PP^*$ decomposes into ${\{(PP^*)_\phi\}}_{\phi\in T_0}$, where $(PP^*)_\phi$ is the orthogonal projection
	in $T_{\mu_0,2}$ onto the subspace of $\sigma(\phi)$-measurable functions $\R^d\to\R^d$ of $T_{\mu_0,2}$.
	From $PP^*Q=PB$ we conclude
	\begin{equation*}
		\langle V(\Psi(\phi))\circ\phi,\eta\rangle_{T_{\mu_0,2}}(PP^*)_\phi\eta
		=\langle V(\Psi(\phi)),\eta_{\Psi(\phi)}\rangle_{T_{\Psi(\phi),2}}\eta_{\Psi(\phi)}\circ\phi
	\end{equation*}
	for every $V\in{\textstyle\int}^\oplus_{\scr P_p} T_{\mu,2}\d\LL(\mu)$ and $\LL_0$-a.e.~$\phi\in T_0$.
	Hence, 
	\begin{align*}
		\langle V(\Psi(\phi))\circ\phi,(PP^*)_\phi\eta\rangle_{T_{\mu_0,2}}(PP^*)_\phi\eta&=
		\langle V(\Psi(\phi))\circ\phi,\eta\rangle_{T_{\mu_0,2}}(PP^*)_\phi\eta\\
		&=\langle V(\Psi(\phi)),\eta_{\Psi(\phi)}\rangle_{T_{\Psi(\phi),2}}\eta_{\Psi(\phi)}\circ\phi
		\\&=\langle V(\Psi(\phi))\circ\phi,\eta_{\Psi(\phi)}\circ\phi\rangle_{T_{\mu_0,2}}\eta_{\Psi(\phi)}\circ\phi,
	\end{align*}
	which implies $(PP^*)_\phi\eta=\sigma(\eta_{\Psi(\phi)}\circ\phi)$, $\LL_0$-a.e.~$\phi\in T_0$, for some $\sigma\in\{-1,1\}$.
	W.l.o.g.~we may assume $\sigma=1$, since otherwise we can redefine ${(\eta_\mu)}_{\mu\in\scr P_p}$ in Remark \ref{rem:sect} accordingly without compromising \eqref{eq:Beta}.
	For $u\in C_b^1(\scr P_1)$ and $\LL_0$-a.e.~$\phi\in T_0$ we have shown
	\begin{multline*}
		\partial_\eta (u\circ\Psi)(\phi)=\langle \nabla(u\circ\Psi),\eta\rangle_{T_{\mu_0,2}}=\langle (PDu)(\phi),\eta\rangle_{T_{\mu_0,2}}
		=\langle (PDu)(\phi),(PP^*)_\phi\eta\rangle_{T_{\mu_0,2}}\\
		=\langle (PDu)(\phi),\eta_{\Psi(\phi)}\circ\phi\rangle_{T_{\mu_0,2}}=\langle Du(\Psi(\phi)),\eta_{\Psi(\phi)}\rangle_{T_{\Psi(\phi),2}}= (D_{\eta_{\Psi(h)}}u)(\Psi(\phi)).
	\end{multline*}
	as claimed and the second statement is a consequence of $(PP^*)_\phi\eta=\eta_{\Psi(\phi)}\circ\phi$.
\end{proof}

Let $\eta\in T_0$ and $\tau_{\eta,s}:T_0\ni\phi \mapsto \phi+s\eta\in T_0$.
The measure $\LL_0$ is called quasi-shift invariant w.r.t.~$\eta$  if  the push-forward $\LL_0\circ\tau_{\eta,s}^{-1}$ is absolutely continuous w.r.t.~$\LL_0$ for all $s\in\R$.
The next example defines the standard-gradient form on $\scr P_p$ and its component forms.

\begin{exa}
	 (i) Let $\eta\in T_{\mu_0,2}$. We assume that $\LL_0$ is quasi-shift invariant w.r.t.~$\eta$ and the Radon-Nikodym derivative $(\d \LL_0\circ\tau_{\eta,s}^{-1})/\d\LL_0$  is continuous
	 for $s\in\R$.
	  In view of  the closability results in \cite{AR} together with Theorem 
	 \ref{T2} and Remark \ref{rem:sect}, there exists ${(\eta_{\mu})}_{\mu\in\scr P_p}\in{\textstyle\int}^\oplus_{\scr P_p} T_{\mu,2}\d\LL(\mu)$
	 such that
	 \begin{equation*}
	 	\EE_{\eta}(u,v):=\int_{\scr P_p} D_{\eta_\mu} u(\mu)D_{\eta_\mu} u(\mu)\d\LL(\mu)\quad\text{for }u,v\in C_b^1(\scr P_p)
	 \end{equation*}
	 defines a closable form in $L^2(\scr P_p,\LL)$ and satisfies $\EE_{\eta}(u,v)=\int_{T_0}\partial_{\eta}(u\circ\Psi)\partial_{\eta}(v\circ\Psi)\d\LL_0$.
	 
	 (ii) If there exists an orthonormal basis $\{\phi_n\}_{n\in\N}$ of $T_{\mu_0,2}$ such that $\LL_0$ is quasi-shift invariant w.r.t.~each $\phi_n$ and the respective density of $\LL_0\circ{\tau_{\phi_n,s}^{-1}}$ is continuous, then  
	 the application of Theorem \ref{T2} regarding the standard gradient form
	 \begin{equation*}
	 	\tt\EE(f,g)=\sum_{n=1}^\infty\int_{T_0} \partial_{\phi_n} f(\phi),\partial_{\phi_n} g(\phi)d\LL_0(\phi),\qquad f,g\in C_b^1(T_0),
	 \end{equation*}
	 together with Remark \ref{rem:sect} and Lemma \ref{lem:partial} yield the following:
	 There exist $\phi_{\mu,n}\in T_{\mu,2}$ for $\mu\in \scr P_2$, $n\in\N$, such that
	  \begin{equation*}
	  	\EE(u,v):=\int_{\scr P_p}\Gamma (u,v)\d\LL(\mu),\qquad u,v\in C_b^1(\scr P_p),
	  \end{equation*}
	  with square-field operator 
	  \begin{equation}\label{eq:Gsum}
	  	\Gamma(u,v)(\mu)=\sum_{n=1}^\infty D_{\phi_{\mu,n}} u(\mu) D_{\phi_{\mu,n}} v(\mu)
	  	\end{equation}
	  is well-defined and closable in $L^2(\scr P_p,\LL)$. Its closure $(\EE,\D(\EE))$ is local, quasi-regular Dirichlet form and satisfies
	 \begin{equation*}
	 \EE(u,u)=\sum_{n=1}^\infty\EE_{\phi_n}(u,u),\qquad u\in\D(\EE).
	 \end{equation*}
	 
	 (iii) Since 
	 \begin{equation*}
	 	\sum_{n=1}^\infty\partial_{\phi_n} f(\phi),\partial_{\phi_n} g(\phi)=\langle \nabla f(\phi),\nabla g(\phi)\rangle_{T_{\mu_0,2}}
	 \end{equation*}
	 for $f,g\in C_b^1(T_0)$, $\LL_0$-a.e.~$\phi\in T_0$, it holds the square-field operator $\Gamma$ defined in \eqref{eq:Gsum} satisfies
	 \begin{equation*}
	 \Gamma(u,v)=\langle D u(\mu), D v(\mu)\rangle_{T_{\mu,2}},\qquad\LL\text{-a.e.~}\mu\in\scr P_p,\,u,v\in C_b^1(\scr P_p).
	\end{equation*}
\end{exa}

Theorem \ref{T2} ensures the existence of a diffusion process on $\scr P_p$.
In \cite{Fu} a correspondence between regular Dirichlet forms and strong Markov processes is built,
see  \cite{FOT11} for a complete theory and more references. This is extended in \cite{AM}, \cite{MR92} to the quasi regular setting.  According to \cite{CMR}, a quasi regular Dirichlet form becomes regular under one-point compactification.

According to  \cite[Definitions IV.1.8,  IV.1.13,  V.1.10]{MR92},   a standard Markov process $$\mathbf M=\big(\Omega,\scr F,(X_t)_{t\geq 0},(\P_\mu)_{\mu\in\scr P_p} \big)$$  with natural filtration $\{\scr F_t\}_{t\geq 0}$ is called a non-terminating diffusion process on $\scr P_p$ if
$$\P_\mu\big(X_\cdot\in C([0,\infty),\scr P_p)\big)=1\quad\text{for } \mu\in \scr P_p.$$
It is called $\LL$-tight if there exists a sequence $\{K_n\}_{n}$  of compact sets in $\scr P_p$ such that stopping times $$\tau_n:=\inf\{t\ge 0: X_t\notin K_n\},\ \ n\in\N$$ satisfy
$$\P_\mu\big(\lim_{n\to\infty} \tau_n=\infty\big)=1\quad\text{for }\LL\text{-a.e.~}\mu\in \scr P_p.$$
The diffusion process is called properly associated with $(\EE,\D(\EE))$, if for any bounded measurable function $u:\scr P_p\to\R$ and $t> 0$,
$$\scr P_p\ni \mu\mapsto\int_{\Omega}u(X_t)\,\d\P_\mu$$
is a quasi-continuous $\LL$-version of $P_t u$, where $(P_t)_{t\ge 0}$ is the Markov semigroup on $L^2(\scr P_p,\LL)$ corresponding to $(\EE,\D(\EE))$.

\begin{cor}\label{C1} In the situation of Theorem $\ref{T2}$, we have the following assertions.
\beg{enumerate}
\item[$(1)$] 	There exists a non-terminating diffusion process $\mathbf M=(\Omega,\scr F,(X_t)_{t\geq 0},(\P_\mu)_{\mu\in\scr P_p} )$ on $\scr P_p$ which is  properly associated with $(\EE,\D(\EE))$.  In particular,  $\LL$ is an invariant probability measure of  $\mathbf M$.
\item[$(2)$] $\mathbf M$  solves the martingale problem for the generator $(L,\D(L))$ of $(\EE,\D(\EE))$, i.e.
for $u\in\D(L)$, the additive functional
\begin{equation*}
\tilde u (X_t)-\tilde u(X_0)-\int_0^tLu(X_s)\d s,\quad t\geq 0,
\end{equation*}
is an $\{\scr F_t\}_{t}$-martingale under $\P_\mu$ for q.e.~$\mu\in\scr P_p$, where $\tilde u$ denotes a quasi-continuous $\LL$-version of $u$.
\end{enumerate}
\end{cor}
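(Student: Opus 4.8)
To prove Corollary \ref{C1}, the plan is to apply the general correspondence between quasi-regular Dirichlet forms and Markov processes from \cite{MR92}. By Theorem \ref{T2} the form $(\EE,\D(\EE))$ is a symmetric, local, quasi-regular Dirichlet form in $L^2(\scr P_p,\LL)$, and since $\scr P_p$ is Polish the framework of \cite[Chapt.~IV]{MR92} applies verbatim. Hence there exists an $\LL$-tight special standard process $\mathbf M=(\Omega,\scr F,(X_t)_{t\ge 0},(\P_\mu)_{\mu\in\scr P_p})$ with natural filtration $\{\scr F_t\}_t$ which is properly associated with $(\EE,\D(\EE))$, i.e.\ for every bounded measurable $u$ the map $\mu\mapsto\int_\Omega u(X_t)\,\d\P_\mu$ is a quasi-continuous $\LL$-version of $P_tu$. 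This already yields the ``properly associated'' assertion of $(1)$; what remains for $(1)$ is that $\mathbf M$ can be taken to be a non-terminating diffusion and that $\LL$ is invariant.

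To see that $\mathbf M$ is a diffusion, I would invoke the locality of $(\EE,\D(\EE))$ together with the characterization of locality via path continuity, \cite[Theorem~V.1.11]{MR92}: after discarding an $\EE$-exceptional set from the state space if necessary, $\mathbf M$ has $\P_\mu$-a.s.\ continuous sample paths on $[0,\zeta)$, where $\zeta$ denotes the lifetime. For non-termination, recall that by the standing convention of this paper $(\EE,\D(\EE))$ is conservative, $\eins_{\scr P_p}\in\D(\EE)$ with $\EE(\eins_{\scr P_p},\eins_{\scr P_p})=0$; since $\LL$ is a probability measure this is equivalent to $P_t\eins_{\scr P_p}=\eins_{\scr P_p}$ $\LL$-a.e.\ for all $t>0$, and because $\mu\mapsto\P_\mu(\zeta>t)$ is a quasi-continuous $\LL$-version of $P_t\eins_{\scr P_p}$ it follows that $\P_\mu(\zeta=\infty)=1$ for q.e.\ $\mu$. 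Restricting to the complement of a properly exceptional set then yields a version of $\mathbf M$ with $\P_\mu\big(X_\cdot\in C([0,\infty),\scr P_p)\big)=1$ for all $\mu\in\scr P_p$, i.e.\ a non-terminating diffusion. Invariance of $\LL$ is then immediate from $\LL$-symmetry and conservativeness of $(P_t)$: for $A\in\scr B(\scr P_p)$ and $t\ge 0$, $\int_{\scr P_p}\P_\mu(X_t\in A)\,\LL(\d\mu)=\int_{\scr P_p}P_t\eins_A\,\d\LL=\int_{\scr P_p}\eins_A\,P_t\eins_{\scr P_p}\,\d\LL=\LL(A)$.

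For $(2)$ I would rely on the Fukushima-type decomposition of additive functionals available for quasi-regular Dirichlet forms (see \cite[Chapt.~VI]{MR92}; alternatively, the form becomes regular after one-point compactification by \cite{CMR}, after which \cite{FOT11} applies). Fix $u\in\D(L)$ and let $\tilde u$ be a quasi-continuous $\LL$-version of $u$, which exists because $(\EE,\D(\EE))$ is quasi-regular. Writing $u=G_\alpha f$ with $f:=(\alpha-L)u\in L^2(\scr P_p,\LL)$ and $G_\alpha=(\alpha-L)^{-1}$ the resolvent, the decomposition gives $\tilde u(X_t)-\tilde u(X_0)=M_t^{[u]}+N_t^{[u]}$, where $M^{[u]}$ is a martingale additive functional of finite energy and $N^{[u]}$ a continuous additive functional of zero energy, and for $u\in\D(L)$ one identifies $N_t^{[u]}=\int_0^tLu(X_s)\,\d s$. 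Consequently $\tilde u(X_t)-\tilde u(X_0)-\int_0^tLu(X_s)\,\d s=M_t^{[u]}$ is a square-integrable $\{\scr F_t\}_t$-martingale under $\P_\mu$ for q.e.\ $\mu\in\scr P_p$, which is precisely the asserted solution of the martingale problem.

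The only genuinely delicate point is the passage from the ``quasi-everywhere'' statements furnished by abstract Dirichlet form theory (non-termination, the martingale property) to the ``for all $\mu\in\scr P_p$'' formulation built into the definition of a non-terminating diffusion; this is handled in the standard way, by restricting to the complement of a suitable properly exceptional set and verifying that the restricted process is again an $\LL$-tight special standard diffusion properly associated with the (still quasi-regular, local, conservative) restricted form, cf.\ \cite[Chapt.~IV--V]{MR92} and \cite{AM}. Beyond this bookkeeping, the corollary is a direct consequence of Theorem \ref{T2} together with the cited structural theorems.
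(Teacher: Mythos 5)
Your proposal is correct and follows essentially the same route as the paper: existence of an $\LL$-tight special standard process via the quasi-regularity theory of \cite{MR92} (Theorems IV.3.5 and V.1.11), continuity from locality, non-termination from conservativeness plus restriction away from an exceptional set (the paper additionally cites the trap/weeding constructions), and for the martingale problem the identification of the zero-energy part $N^{[u]}_t=\int_0^t Lu(X_s)\,\d s$ for $u\in\D(L)$ via \cite[Chap.~VI]{MR92} and \cite[Theorems 5.2.2, 5.2.4]{FOT11} after regularization. No gaps worth noting.
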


\beg{proof}
(1) By \cite[Theorem IV.3.5 \&  Theorem  V.1.11]{MR92}, the locality and quasi regularity ensured by Theorem \ref{T2} imply the existence of
a $\LL$-tight special standard process $$\mathbf M=(\Omega,\scr F,(X_t)_{t\geq 0},(\P_\mu)_{\mu\in\scr P_p\cup \{\Delta\}} )$$ with state space $(\scr P_p,\W_p)$, life time $\zeta$ and filtration $\{\scr F_t\}_{t\geq 0}$
(as defined in \cite[Chap.~IV, Definition IV.1.5, IV.1.8, IV.1.13, V.1.10]{MR92})
which meets
\begin{equation*}
\P_\mu(\{\omega\in\Omega\,:\,[0,\zeta(\omega))\ni t\mapsto X_t(\omega)\text{ is continuous}\})=1\quad\text{for }\mu\in \scr P_p.
\end{equation*}
and is properly associated with $(\EE,\D(\EE))$ in the sense of \cite[Definition IV.2.5]{MR92}.

Since $\eins_{\scr P_p}\in\D(\EE)$ and $\EE(\eins_{\scr P_p},\eins_{\scr P_p})=0$, it holds $T_t\eins_{\scr P_p},=\eins_{\scr P_p}$ for $t\geq 0$. This means there exists a set $N\subseteq \scr P_p$ of zero capacity (referring to the $1$-capacity associated with $\EE$) such that  $\P_\mu(\{\zeta=\infty\})=1$ for $\mu\in\scr P_p\setminus N$. Without loss of generality,  $\scr P_p\setminus N$ may be assumed to be $\mathbf M$-invariant, by virtue of \cite[Corollariy IV.6.5]{MR92}. Considering the restriction $\mathbf M|_{\scr P_p\setminus N}$ (see \cite[Remark IV.6.2(i)]{MR92}) and then applying
the procedure described in \cite[Chapt.IV, Sect.3, pp.~117f.]{MR92}, re-defining $\mathbf M$ in such way that each element from $N$ is a trap,  we may assume $\P_\mu(\{\zeta=\infty\})=1$ for all $\mu\in\scr P_p$.
Furthermore, after the procedure of weeding (restricting the sample space to a subset of $\Omega$), as explained in \cite[Chap.~III, Paragraph 2, pp.~86f.]{D65}, we may  assume that $\mathbf M$ is non-terminating and continuous, i.e.~$\zeta(\omega)=\infty$ and $[0,\infty)\mapsto X_t(\omega)$ a continuous map for every $\omega\in\Omega$.

(2) 	Let $u\in\D(L)$ and
\begin{equation*}
A_t:\Omega\ni\omega\mapsto\int_0^tLu(X_s(\omega))\d s,\quad t\geq 0.
\end{equation*}
Then, $\{A_t\}_{t\geq 0}$ is an continuous additive functional of $\mathbf M$ with zero energy. Moreover,
\begin{equation*}
\E_\mu(A_t)=\int_0^t(T_sLu)(\mu)\d s=(T_tu-u)(\mu)\quad \text{for }\LL\text{-a.e.~}\mu\in\scr P_p.
\end{equation*}
Now, the claim follows from \cite[Theorem VI.2.5]{MR92}, resp.~\cite[Theorem 5.2.2]{FOT11}, in combination with \cite[Theorem 5.2.4]{FOT11} and regularization of the Dirichlet form $(\EE,\D(\EE))$ as explained in \cite[Chap.~VI]{MR92}.
\end{proof}

\section{  Ornstein--Uhlenbeck type processes}\label{sec:applic}

\noindent In this section, we study O-U type Dirichlet forms as constructed in Section 3 for $\LL_0$ being  a non-degenerate Gaussian measure on the tangent space $T_0$. We first consider the case that $X=H$ is a separable Hilbert space and $p=2$, so that $T_0:=L^2(H\to H,\mu_0)$ is a Hilbert space, which covers the framework in \cite{RW22} where $H=\R^d$ is concerned; then extend to the more general setting where $X$ is a separable Banach space and $p\in [1,\infty).$

\subsection{O-U type process on \texorpdfstring{$\scr P_2$}{P2} over Hilbert space }\label{sec:exaFTS}

\noindent Let $X=H$ be a separable Hilbert space and consider the quadratic Wasserstein space $\scr P_2$.
For fixed $\mu_0\in\scr P_2$, which is absolutely continuous w.r.t.~the Lebesgue measure, the tangent space is   $T_0:=T_{\mu_0,2}:=L^2(H\to H,\mu_0)$.
Let $(A,\D(A))$ be a strictly positive definite self-adjoint linear operator on $T_0$ with pure point spectrum. We denote its eigenvalues in increasing order with multiplicities by $0<\alpha_1\leq\alpha_2,\dots$ and the corresponding unitary eigenvectors  $\{\phi_n\}_{n\in\N}$ is an orthonormal basis of $T_0$, which is called the eigenbasis of $(A,\D(A))$.
We assume that
$$\sum_{n=1}^\infty \alpha_n^{-1}<\infty,$$
which ensures the existence of a centred Gaussian measure $G$ on $T_0$ whose covariance operator is given by the inverse of $A$. In following, we identify $T_0$ with $\ell^2$ using the coordinate representation w.r.t.~$\{\phi_n\}_{n\in\N}$, i.e.
\begin{equation*}
T_0\ni\phi\overset{\simeq}{\longmapsto}{\big(\langle\phi_n,\phi\rangle_{T_0}\big)}_{n\in\N}\in \ell^2.
\end{equation*}
Then, the Gaussian measure $G$  is represented as the product measure
\beq\label{MN} G(\d \phi):= \prod_{n=1}^\infty m_n(\d \<\phi_n,\phi\>_{T_0})\quad \text{with}\quad m_n(\d r):=  \Big(\ff{\alpha_n}{2\pi}\Big)^{\ff 1 2} \exp\Big[-\ff{\alpha_nr^2}{2}\Big]\d r.\end{equation}
According to \cite{RW22}, the  corresponding non-degenerate Gaussian measure on $\scr P_2$   is defined as
$$N_G:=  G\circ \Psi^{-1}$$
with $\Psi:T_0\to\scr P_2$ as in \eqref{eqn:PsiDef}. $N_G$ has full topological support and can be constructed from any other choice of an absolutely continuous measure $\mu_*\in \scr P_2$
instead of $\mu_0$, as well, by transforming the eigenbasis.
\begin{rem}\label{rem:inv}
Choosing $\phi_*\in T_{\mu^*}=L^2(\R^d\to\R^d,\mu_*)$ such that $\mu_0=\mu_*\circ\phi_*^{-1}$, we obtain the isometry
\begin{equation*}
I:T_{0}\ni\phi\mapsto \phi\circ\phi_*\in T_{\mu_*}.
\end{equation*}
Let
\begin{equation*}
\Psi_*:T_{\mu_*}\ni\phi\mapsto\mu_*\circ\phi^{-1}\in \scr P_2\quad\text{and}\quad G_*:= G\circ I^{-1}.
\end{equation*}
Then, $G_*$ is a Gaussian measure on $T_{\mu_*}$ and
\begin{equation*}
N_G=G\circ\Psi^{-1}=G\circ(\Psi_*\circ I)^{-1}=G_*\circ\Psi_*^{-1}
\end{equation*}
yields a representation of $N_G$ in terms of $\mu_*$ and $G_*$. 
\end{rem}
By  \cite[Theorem~3.10]{AR}, the bilinear form
$$\tt\EE(f,g):= \LL_0(\<\nn f,\nn g\>_{T_0}),\ \ f,g\in C_b^1(T_0),$$
is closable in $L^2(T_0,\LL_0)$ and its closure $(\tt\EE,\D(\tt\EE))$ is a local Dirichlet form. Moreover, by \cite[Proposition~3.2]{RZ},  the class of smooth cylindrical functions
$$\scr F C_b^\infty(T_0):=\big\{g(\<\cdot,\psi_1\>_{T_0},\cdots, \<\cdot, \psi_n\>_{T_0})\,:\,\ n\in \mathbb N,\ g\in C_b^\infty(\R^n),\,\psi_1,\dots,\psi_n\in T_0\big\} $$
is  dense in $\D(\widetilde \EE)$ w.r.t.~$\widetilde \EE_1^{1/2}$-norm, so $(\tt\EE,\D(\tt\EE))$ is also the closure of $(\tt\EE, \scr F C_b^\infty(T_0)).$

Now, By Theorem \ref{T2},  the bilinear form
$$\EE(u,v):=\int_{\scr P_2} \big\<Du(\mu), D v(\mu)\big\>_{T_{\mu,2}}\,\d N_G(\mu),\ \ u,v\in {C_b^1}(\scr P_2),$$ is closable in
$L^2(\scr P_p,N_G)$, and the closure $(\EE,\D(\EE))$ is a
quasi-regular local Dirichlet form. Moreover, as shown in  \cite[Theorem 3.2]{RW22} that $(\EE,\D(\EE))$ satisfies the log-Sobolev inequality has a semigroup of compact operators.
Moreover, we have the following   consequence of Theorem \ref{TN}(2).

\begin{cor}\label{cor:heatKenel} $(\EE,\D(\EE))$ is a quasi-regular, local Dirichlet form on $L^2(\scr P_2,N_G)$.  Its generator $L$ has purely discrete spectrum with eigenvalues $0>\lambda_1\geq\lambda_2\dots$, listed in decreasing order containing multiplicities. The associated Markov semigroup   $\{T_t\}_{t\geq 0}$ has density $\{p_t\}_{t\geq 0}$ with respect to $N_G$ and the estimate
\beg{equation*} \int_{\scr P_2\times \scr P_2} p_t(\mu,\nu)^2 \,\d N_G(\mu) \,\d N_G(\nu)=\sum_{n=1}^\infty \e^{2\ll_n t}
\le  \prod_{n\in \mathbb N}  \Big(1+ \ff{2\e^{-2\alpha_n t}}{(2\alpha_n t)\land 1}\Big)<\infty,\ \ t>0,\end{equation*}
holds true.
\end{cor}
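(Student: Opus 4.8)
The plan is to apply Theorem \ref{TN}(2) directly, which already furnishes the equality $\int_{\scr P_2\times\scr P_2}p_t(\mu,\nu)^2\,\d N_G(\mu)\,\d N_G(\nu)=\sum_{n=1}^\infty\e^{2\ll_n t}$ once we verify its hypotheses, and then to prove the upper bound $\sum_{n=1}^\infty\e^{2\ll_n t}\le\prod_{n\in\N}(1+\tfrac{2\e^{-2\aa_n t}}{(2\aa_n t)\land 1})$ by a spectral comparison. First I would record that $(\tt\EE,\D(\tt\EE))$ is the O-U Dirichlet form on $L^2(T_0,G)$ with $G$ the centred Gaussian of covariance $A^{-1}$; its generator $\tt L=L^{\text{OU}}$ has purely discrete spectrum, and the eigenvalues of $-\tt L$ are exactly the finite sums $\si=\sum_{n}k_n\aa_n$ over multi-indices $(k_n)_{n\in\N}$ with $k_n\in\N_0$ and $\sum_n k_n<\infty$ (Hermite-polynomial eigenfunctions, one factor per coordinate). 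Hence $\sum_{n\ge 1}\e^{-\si_n t}=\prod_{n\in\N}\sum_{k=0}^\infty\e^{-k\aa_n t}=\prod_{n\in\N}(1-\e^{-\aa_n t})^{-1}$, which is finite for every $t>0$ because $\sum_n\aa_n^{-1}<\infty$ forces $\sum_n\e^{-\aa_n t}<\infty$. This checks the remaining hypothesis of Theorem \ref{TN}(2), so that theorem applies and gives both the purely discrete spectrum of $L$, the ordering $\ll_n\ge\si_n$ (equivalently, in the sign convention of the corollary, $-\ll_n\ge\si_n$), and the stated identity for the $L^2$-norm of the heat kernel.

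It then remains to bound $\sum_{n\ge 1}\e^{-2\si_n t}$ — the right quantity after $\ll_n\ge\si_n$ — by the asserted product. Enumerating the eigenvalues of $-\tt L$ by multi-indices as above,
\[
\sum_{n\ge 1}\e^{-2\si_n t}=\prod_{n\in\N}\ \sum_{k=0}^\infty\e^{-2k\aa_n t}=\prod_{n\in\N}\frac{1}{1-\e^{-2\aa_n t}},
\]
so the claim reduces to the elementary pointwise inequality $(1-\e^{-s})^{-1}\le 1+\tfrac{2\e^{-s}}{s\land 1}$ for all $s>0$, applied with $s=2\aa_n t$. This in turn is equivalent to $\tfrac{\e^{-s}}{1-\e^{-s}}\le\tfrac{2\e^{-s}}{s\land 1}$, i.e. to $s\land 1\le 2(1-\e^{-s})$; for $s\ge 1$ the left side is $1\le 2(1-\e^{-1})$, and for $0<s<1$ it is $s\le 2(1-\e^{-s})$, which holds since $1-\e^{-s}\ge s-s^2/2\ge s/2$ on $(0,1)$. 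Multiplying over $n$ gives the bound, and finiteness of the product is again guaranteed by $\sum_n\e^{-2\aa_n t}<\infty$.

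I expect the only genuine point requiring care to be the bookkeeping of the O-U spectrum on the infinite-dimensional space $T_0\cong\ell^2$: one must make precise that the Gaussian $G$ in \eqref{MN} is a genuine product measure, that the Hermite functions in the coordinates $\{\<\phi_n,\cdot\>_{T_0}\}$ form an orthonormal eigenbasis of $-\tt L$ with eigenvalues $\sum_n k_n\aa_n$, and that these exhaust the spectrum — this is standard (see e.g.~\cite[Chap.~10]{DZ} or the Wiener chaos decomposition) but should be cited rather than re-proved. Everything else is either an invocation of Theorem \ref{TN}(2) or the elementary scalar estimate above; in particular the locality and quasi-regularity claims in the corollary have already been established via Theorem \ref{T2} in the preceding paragraph, so they need only be restated.
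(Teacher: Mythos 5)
Your proposal is correct and follows essentially the same route as the paper: both reduce the corollary to Theorem \ref{TN}(2) plus a bound on $\sum_{n}e^{-2\sigma_n t}$ for the O-U form on $T_0$, obtained by factorizing over the eigenbasis of $A$ and using the per-coordinate Hermite spectrum $\{k\alpha_n\}_{k\ge 0}$. The only real difference is cosmetic: you sum the geometric series exactly and check the elementary inequality $(1-e^{-s})^{-1}\le 1+\tfrac{2e^{-s}}{s\wedge 1}$, while the paper bounds $\sum_{k}e^{-2k\alpha_n t}$ by an integral comparison and identifies the resulting product with the $L^2(G\times G)$-norm of the product heat kernel, both arguments relying on the same standard (cited, not re-proved) facts about the Gaussian O-U generator having purely discrete spectrum with Hermite eigenfunctions.
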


\beg{proof} It suffices to verify the estimate for $\{p_t\}_{t\geq 0}$.
For any $n\in\mathbb N$,  we look at the O-U process on $\R$ generated by
$$L_n^{(1)}h(x):= h''(x)- \alpha_n x h'(x),\qquad x\in\R,\,h\in C_b^1(\R).$$
It is well known that $-L_n^{(1)}$ has eigenvalues $\{k \alpha_n\}_{k\ge 0}$ with Hermit polynomials as eigenfunctions:
$$H_k(x):= \e^{ \alpha_nx^2/2} \ff{\d^k}{\d x^k} \e^{-\alpha_nx^2/2}.$$
Let $p_t^n(x,y)$ be the heat kernel w.r.t. $m_n$  in \eqref{MN}.
Then  for any $n\in\mathbb N$ and $t>0$,
\beq\label{*R}\beg{split}& \int_{\R\times\R} p_t^n(x,y)^2\d m_n(x)\d m_n(y)=\sum_{k=0}^\infty \e^{-2 k \alpha_n t} \\
& \le 1+ \e^{-2 \alpha_nt}+\int_1^\infty   \e^{-2\alpha_nts} \d s  \le 1+ \ff{2\e^{-2\alpha_n t}}{(2\alpha_n t)\land 1}.
\end{split}  \end{equation}
Noting that $\sum_{n=1}^\infty \alpha_n^{-1}<\infty$ implies
$$\sum_{n=1}^\infty \log\Big(1+ \ff{2\e^{-2\alpha_n t}}{(2\alpha_n t)\land 1}\Big) \le \sum_{n=1}^\infty   \ff{2\e^{-2\alpha_n t}}{(2\alpha_n t)\land 1}<\infty,$$
we conclude that
$$ p_t^\infty({\bf x}, {\bf y}):= \prod_{n=1}^\infty p_t^n(x_n,y_n),\ \ {\bf x}=(x_n), {\bf y}=(y_n)\in \R^\N$$
is a well defined measurable function in $L^2(m^{\infty}\times m^{\infty})$, where $m^{\infty}:= \prod_{n=1}^\infty m_n$, and
$$\int   p_t^\infty({\bf x}, {\bf y})^2\d m^{\infty}({\bf x})  \d m^{\infty}({\bf y})=  \prod_{n=1}^\infty  \int_{\R\times\R} p_t^n(x,y)^2 \d m_n( x)\d m_n(y)\le\tilde \xi_t,$$
holds for
$$\tilde\xi_t:=  \prod_{n\in \mathbb N}  \Big(1+ \ff{2\e^{-2\alpha_n t}}{(2\alpha_n t)\land 1}\Big)<\infty,\ \ t>0. $$
Let $\widetilde T_t$ be the O-U semigroup associated with  $(\widetilde\EE, \D(\widetilde\EE))$.  Then for every $t>0$, $\widetilde T_t$  has the following density with respect to $G$:
$$\tt p_t(\phi,\phi') = p_t^\infty({\bf x}(\phi), {\bf x}(\phi')),\ \  {\bf x}(\phi):= (\<\phi,\phi_n\>_{T_{0}})_{n\in\mathbb N},$$
so that  by the spectral representation, see for instance \cite{Davies},  the eigenvalues $\{\sigma_n\}_{n\in\N}$ of $-\widetilde L$ satisfies
$$\sum_{n=1}^\infty \e^{-2 t\sigma_n}  = \int_{T_{0}\times T_{0}} \tt p_t(\phi,\phi')^2  \,\d G(\phi)\,\d G(\phi') \le\tilde \xi_t.$$
Then the desired assertion is implied by Theorem \ref{TN}(2).
\end{proof}

\subsection{Partial integration for \texorpdfstring{$N_G$}{NG}}\label{sec:ibpf}

We continue in the setting of Section \ref{sec:exaFTS} with $G$, ${\{\phi_n\}}_{n\in\N}$ and $N_G$ as above.
By Lemma \ref{lem:partial} there exist $\phi_{\mu,n}$ for $\mu\in\scr P_2$ such that
for $n\in\N$, $u\in C_b^1(\scr P_2)$, $G$-a.e.~$\phi\in T_0$ and $\xi\in T_{\Psi(\phi),2}$ we have
\begin{equation*}
	\partial_{\phi_n} (u\circ\Psi)(\phi)= (D_{\phi_{\Psi(\phi)},n}u)(\Psi(\phi))
\end{equation*}
and
\begin{equation*}
	\langle \phi_n,\xi\circ\phi\rangle_{T_0}=\langle \phi_{\Psi(\phi),n}\circ\phi,\xi\circ\phi\rangle_{T_0}.
\end{equation*}
We now state a partial integration formula for $N_G$.

\begin{lem}\label{lem:PI}
	(i) For $n\in\N$, $u,v\in C_b^1(\scr P_2)$ it holds
	\begin{equation*}
		\int_{\scr P_2}D_{\phi_{\mu,n}}u(\mu)v(\mu)\d N_G(\mu)=-\int_{\scr P_2}u(\mu)\big(D_{\phi_{\mu,n}}v(\mu)- \alpha_nv(\mu)\langle\phi_{\mu,n},id\rangle_{T_{\mu,2}}\big)\d N_G(\mu).
	\end{equation*}
	
	(ii) Let $u\in C_b^1(\scr P_2)$ and  $V(\mu)=\sum_{n=1}^M\xi_n(\mu)\phi_{\mu,n}$, $\mu\in\scr P_2$, for given  $M\in\N$, $\xi_n\in C_b^1(\scr P_2)$. 
	It holds
	\begin{equation*}
		\int_{\scr P_2} \langle Du(\mu),V(\mu)\rangle_{T_{\mu,2}}\d N_G(\mu)=-\int_{\scr P_2} u(\mu)\Big(\sum_{n=1}^MD_{\phi_{\mu,n}} \xi_n(\mu)
		-\alpha_n\langle \phi_{\mu,n},id\rangle_{T_{\mu,2}} \xi_n(\mu)\Big)\d N_G(\mu).
	\end{equation*}
\end{lem}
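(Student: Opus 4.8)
The plan is to reduce the partial integration formula on $\scr P_2$ to the classical integration by parts formula for the Gaussian measure $G$ on the Hilbert space $T_0$, using the isometry $L^2(\scr P_2,N_G)\cong L^2(T_0,G)$ from \eqref{WW} together with the chain rule Lemma \ref{lem:chain} and the directional-derivative identities of Lemma \ref{lem:partial}. First I would recall that for the product Gaussian $G$ in \eqref{MN}, with eigenbasis $\{\phi_n\}_{n\in\N}$ and eigenvalues $\{\alpha_n\}$, the one-dimensional Gaussian integration by parts in the $\phi_n$-direction reads
\begin{equation*}
\int_{T_0}\partial_{\phi_n}F(\phi)\,G(\d\phi)=\alpha_n\int_{T_0}\<\phi_n,\phi\>_{T_0}F(\phi)\,G(\d\phi),\qquad F\in C_b^1(T_0),
\end{equation*}
which is just the scalar formula $\int r\,m_n(\d r)F=\alpha_n^{-1}\int F'\,m_n(\d r)$ applied in the $n$-th coordinate (this is the quasi-shift-invariance of $G$ w.r.t.\ $\phi_n$ with logarithmic derivative $-\alpha_n\<\phi_n,\cdot\>_{T_0}$). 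Applying this with $F=(u\circ\Psi)(v\circ\Psi)$ and using the product rule for Fr\'echet derivatives gives
\begin{equation*}
\int_{T_0}\big[\partial_{\phi_n}(u\circ\Psi)\big](v\circ\Psi)\,G(\d\phi)
=-\int_{T_0}(u\circ\Psi)\Big[\partial_{\phi_n}(v\circ\Psi)-\alpha_n\<\phi_n,\phi\>_{T_0}(v\circ\Psi)\Big]G(\d\phi).
\end{equation*}

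Next I would translate each term through $\Psi$. By Lemma \ref{lem:partial}, $\partial_{\phi_n}(u\circ\Psi)(\phi)=(D_{\phi_{\Psi(\phi)},n}u)(\Psi(\phi))$ for $G$-a.e.\ $\phi$, and likewise for $v$; and the isometry \eqref{WW} turns $\int_{T_0}(\cdot\circ\Psi)G(\d\phi)$ into $\int_{\scr P_2}(\cdot)\,N_G(\d\mu)$. The only term needing the second identity of Lemma \ref{lem:partial} is the logarithmic-derivative term: writing $\phi=id\circ\phi$ (i.e.\ taking $\xi=id\in T_{\Psi(\phi),2}$, which lies in $L^2$ since $\Psi(\phi)=\mu_0\circ\phi^{-1}\in\scr P_2$), we get $\<\phi_n,\phi\>_{T_0}=\<\phi_n,id\circ\phi\>_{T_0}=\<\phi_{\Psi(\phi),n}\circ\phi,id\circ\phi\>_{T_0}=\<\phi_{\Psi(\phi),n},id\>_{T_{\Psi(\phi),2}}$, using that $\int_X \<\psi_1\circ\phi,\psi_2\circ\phi\>_H\,\mu_0(\d x)=\int_X\<\psi_1,\psi_2\>_H\,(\mu_0\circ\phi^{-1})(\d x)$ for $\psi_1,\psi_2\in T_{\Psi(\phi),2}$. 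Substituting these three identities into the displayed Gaussian formula and rewriting the integrals over $T_0$ as integrals over $\scr P_2$ via \eqref{WW} yields exactly assertion (i).

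For (ii), the plan is simply to sum (i) over $n=1,\dots,M$ after replacing the test function $v$ by $\xi_n\in C_b^1(\scr P_2)$: this gives
\begin{equation*}
\sum_{n=1}^M\int_{\scr P_2}D_{\phi_{\mu,n}}u(\mu)\,\xi_n(\mu)\,\d N_G(\mu)
=-\int_{\scr P_2}u(\mu)\sum_{n=1}^M\Big(D_{\phi_{\mu,n}}\xi_n(\mu)-\alpha_n\<\phi_{\mu,n},id\>_{T_{\mu,2}}\xi_n(\mu)\Big)\d N_G(\mu),
\end{equation*}
and it only remains to recognize that $\sum_{n=1}^M D_{\phi_{\mu,n}}u(\mu)\,\xi_n(\mu)=\<Du(\mu),\sum_{n=1}^M\xi_n(\mu)\phi_{\mu,n}\>_{T_{\mu,2}}=\<Du(\mu),V(\mu)\>_{T_{\mu,2}}$ by linearity of the intrinsic derivative in its direction argument (and the embedding $T_{\mu,2}\subseteq T_{\mu,p}$ noted in Remark \ref{rem:image}), which is legitimate because $u\in C_b^1(\scr P_2)$ so $Du(\mu)\in T_{\mu,2}^*$ pairs with the finite sum $V(\mu)\in T_{\mu,2}$.

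The main obstacle I anticipate is purely a matter of justifying the measure-theoretic bookkeeping: one must check that all the $G$-a.e.\ identities from Lemma \ref{lem:partial} can be used simultaneously (a countable intersection of full-measure sets suffices, since only finitely many indices $n$ occur), that $id$ genuinely belongs to $T_{\Psi(\phi),2}=L^2(X\to X,\Psi(\phi))$ for $G$-a.e.\ $\phi$ (true by definition of $\scr P_2$), and that the Gaussian integration by parts is applicable — i.e.\ that $(u\circ\Psi)(v\circ\Psi)\in C_b^1(T_0)$, which follows from Lemma \ref{lem:chain} and $C_b^1(T_0)$ being an algebra. No genuinely hard estimate is needed; the content is the translation dictionary $\Psi$ between $(T_0,G)$ and $(\scr P_2,N_G)$ already assembled in Lemmas \ref{lem:chain} and \ref{lem:partial}.
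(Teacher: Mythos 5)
Your proposal is correct and follows essentially the same route as the paper: the one-coordinate Gaussian integration by parts on $(T_0,G)$ applied to $(u\circ\Psi)(v\circ\Psi)$, translated to $\scr P_2$ via the isometry \eqref{WW}, Lemma \ref{lem:chain} and the two identities of Lemma \ref{lem:partial} (the second with $\xi=id$), exactly as in the paper's derivation of \eqref{eq:oneH}--\eqref{eq:twoH}. For (ii) the paper pulls $\langle Du(\mu),V(\mu)\rangle_{T_{\mu,2}}$ back to $T_0$ before invoking the Gaussian formula, whereas you sum (i) with $v=\xi_n$ and use the definitional pairing $D_{\phi_{\mu,n}}u(\mu)=\langle Du(\mu),\phi_{\mu,n}\rangle_{T_{\mu,2}}$; this is only a reordering of the same steps, not a different argument.
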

\begin{proof}
	(i) On the one hand, we have
	\begin{equation}\label{eq:oneH}
		\int_{T_0}\partial_{\phi_{n}}(u\circ\Psi)(\phi)(v\circ\Psi)(\phi)\d G(\phi)=
		-\int_{T_0}(u\circ\Psi)(\phi)\big(\partial_{\phi_{n}}(v\circ\Psi)(\phi)- \alpha_n(v\circ\Psi)(\phi)\langle\phi_{n},\phi\rangle_{T_0}\big)\d G(\phi)
	\end{equation}
	by construction of $G$. On the other hand, by Lemma \ref{lem:partial} it holds
	\begin{align*}
		\int_{T_0}\partial_{\phi_{n}}(u\circ\Psi)(\phi)(v\circ\Psi)(\phi)\d G(\phi)&=\int_{T_0}D_{\phi_{\Psi(\phi),n}}u(\Psi(\phi))v(\Psi(\phi))\d G(\phi)\\
		&=\int_{\scr P_2}D_{\phi_{\mu,n}}u(\mu)v(\mu)\d N_G(\mu)
	\end{align*}
	and 
	\begin{multline}\label{eq:twoH}
		\int_{T_0}(u\circ\Psi)(\phi)\big(\partial_{\phi_{n}}(v\circ\Psi)(\phi)- \alpha_n(v\circ\Psi)(\phi)\langle\phi_{n},\phi\rangle_{T_0}\big)\d G(\phi)=
		\\\int_{T_0}(u\circ\Psi)(\phi)\big(D_{\phi_{\Psi(\phi),n}}u(\Psi(\phi))- \alpha_n(v\circ\Psi)(\phi)\langle\phi_{\Psi(\phi),n},id\rangle_{\Psi(\phi)}\big)\d G(\phi)\\=
		\int_{\scr P_2}u(\mu)\big(D_{\phi_{\mu,n}}v(\mu)- \alpha_nv(\mu)\langle\phi_{\mu,n},id\rangle_{T_{\mu,2}}\big)\d N_G(\mu).
	\end{multline}
	
	(ii) Again by Lemma \ref{lem:partial} we have
	\begin{align*}
			\int_{\scr P_2} \langle Du(\mu),V(\mu)\rangle_{T_{\mu,2}}\d N_G(\mu)&=
			\sum_{n=1}^M\int_{T_0}\xi_n(\Psi(\phi)) \langle Du(\Psi(\phi))\circ\phi,\phi_{\Psi(\phi),n}\circ\phi\rangle_{T_0}\d G(\phi)\\
			&=\sum_{n=1}^M\int_{T_0} \xi_n(\Psi(\phi))\langle Du(\Psi(\phi))\circ\phi,\phi_{n}\rangle_{T_0}\d G(\phi).
	\end{align*}
	Applying \eqref{eq:oneH} and \eqref{eq:twoH} yields
	\begin{align*}
		\int_{\scr P_2} \langle Du(\mu),V(\mu)\rangle_{T_{\mu,2}}\d N_G(\mu)=\sum_{n=1}^M
		\int_{\scr P_2}u(\mu)\big(D_{\phi_{\mu,n}}\xi_n(\mu)- \alpha_n\xi_n(\mu)\langle\phi_{\mu,n},id\rangle_{T_{\mu,2}}\big)\d N_G(\mu)
	\end{align*}
	as claimed.
\end{proof}

\begin{rem}
Let $u\in C_b^1(\scr P_2)$. For $V\in {\textstyle\int}^\oplus_{\scr P_p} T_{\mu,2}\d N_G(\mu)$ we have
\begin{multline*}
	\sum_{n=1}^\infty\int_{\scr P_2}\langle  D_{\phi_{\mu,n}} u(\mu)\phi_{\mu,n}, V(\mu)\rangle_{T_{\mu,2}}\d N_G(\mu)
	=\sum_{n=1}^\infty\int_{T_0}\langle \partial_{\phi_{n}} u(\Psi(\phi))\phi_{n}, V(\Psi(\phi))\circ\phi\rangle_{T_{0}}\d G(\phi)\\
	=\int_{T_0}\langle \nabla(u\circ\Psi)(\phi), V(\Psi(\phi))\circ\phi\rangle_{T_{0}}\d G(\phi)
	=\int_{\scr P_2}\langle Du(\mu), V(\mu)\rangle_{T_{\mu,2}}\d N_G(\mu)
\end{multline*}
and hence
 \begin{equation*}
 	Du(\mu)=\sum_{i=1}^\infty D_{\phi_{\mu,n}}u(\mu)\phi_{\mu,n },\qquad N_G\textnormal{-a.e.~}\mu\in\scr P_2.
 \end{equation*}
%
\end{rem}
Let $n\in\N$.
 We define $\partial_nu:\mu\mapsto D_{\phi_{\mu,n}}u(\mu)\in\R$ for $u\in C_b^1(\scr P_2)$
 and if $\partial_nu\in C_b^1(\scr P_2)$ we set
 \begin{equation*}
 L_nu:=\partial_n\partial_nu(\mu)
 -\alpha_n\langle \phi_{\mu,n},id\rangle_{T_{\mu,2}} \partial_nu(\mu).
  \end{equation*}
  As a consequence of Lemma \ref{lem:PI}(ii), for such $u$, the operator $L_n$ coincides with generator of the $n$-th component form, i.e.~the closure of
  \begin{equation*}
  	\EE^n(u,v):=\int_{\scr P_2}\partial_nu\partial_nv\d N_G, \qquad u,v\in C_b^1(\R),
  \end{equation*}
  in $L^2(\scr P_2,N_G)$.
By the partial integration formula  for $N_G$ stated in Lemma \ref{lem:PI}(ii)
and a straight-forward approximation, we can give an explicit expression for the generator $L$ of the Markov semigroup considered in Corollary \ref{cor:heatKenel}, 
on a certain set of functions.
\begin{cor} 
	If $u,\partial_nu\in C_b^1(\scr P_2)$ for all $n\in\N$ and the series
	$\sum_{n=1}^\infty L_nu$ converges in $L^2(\scr P_2,N_G)$, then $u\in D(L)$ and $Lu=\sum_{n=1}^\infty L_nu$.
\end{cor}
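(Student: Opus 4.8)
The plan is to identify $Lu$ with $\sum_{n=1}^\infty L_n u$ by testing against a dense set of functions in $\D(\EE)$ and using the partial integration formula from Lemma \ref{lem:PI}(ii). First I would recall that, by definition of the generator, it suffices to show that for every $v\in C_b^1(\scr P_2)$ (a core for $\EE$ by the proof of Theorem \ref{TN}(1)) the identity $\EE(u,v)=-N_G\big((\sum_{n=1}^\infty L_nu)\,v\big)$ holds; indeed, since $C_b^1(\scr P_2)$ is dense in $\D(\EE)$ and $\sum_n L_n u\in L^2(\scr P_2,N_G)$ by hypothesis, this characterizes $u\in\D(L)$ with $Lu=\sum_n L_n u$. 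Using the previous Remark, $Du(\mu)=\sum_{n=1}^\infty \partial_n u(\mu)\,\phi_{\mu,n}$ for $N_G$-a.e.~$\mu$, so that by the representation $\EE(u,v)=\int_{\scr P_2}\langle Du(\mu),Dv(\mu)\rangle_{T_{\mu,2}}\,\d N_G(\mu)$ established in Section \ref{sec:exaFTS} we get $\EE(u,v)=\sum_{n=1}^\infty \int_{\scr P_2}\partial_n u(\mu)\,\partial_n v(\mu)\,\d N_G(\mu)=\sum_{n=1}^\infty \EE^n(u,v)$, where interchanging sum and integral is justified by Parseval on $T_0$ pulled back via $\Psi$ together with $u,v\in C_b^1(\scr P_2)$.

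Next I would apply Lemma \ref{lem:PI}(ii) with the single-term vector field $V(\mu)=\partial_n u(\mu)\,\phi_{\mu,n}$ — which is admissible once $\partial_n u\in C_b^1(\scr P_2)$ — to rewrite each summand as $\EE^n(u,v)=\int_{\scr P_2}\partial_n u(\mu)\,\partial_n v(\mu)\,\d N_G(\mu)=-\int_{\scr P_2} v(\mu)\,L_n u(\mu)\,\d N_G(\mu)$; here I swap the roles of $u$ and $v$ compared to the statement of the lemma, which is legitimate by symmetry of $\EE^n$ and the hypothesis that $\partial_n u\in C_b^1(\scr P_2)$, so that $L_n u$ is well-defined and the partial integration identity from Lemma \ref{lem:PI}(i) (with $v$ there equal to the present $\partial_n u$, and the test function being the present $v$) gives exactly $\int \partial_n v\,\partial_n u\,\d N_G=-\int v\,L_n u\,\d N_G$. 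Summing over $n$ and moving the (finite-$L^2$) sum inside the integral yields $\EE(u,v)=-\int_{\scr P_2} v(\mu)\big(\sum_{n=1}^\infty L_n u(\mu)\big)\,\d N_G(\mu)$ for all $v\in C_b^1(\scr P_2)$.

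Finally, I would invoke density of $C_b^1(\scr P_2)$ in $\D(\EE)$ with respect to the $\EE_1^{1/2}$-norm: the linear functional $v\mapsto \EE(u,v)+N_G\big((\sum_n L_n u)v\big)$ vanishes on $C_b^1(\scr P_2)$, is continuous in $v$ w.r.t.~$\EE_1^{1/2}$ (since $u\in\D(\EE)$ and $\sum_n L_n u\in L^2$), hence vanishes on all of $\D(\EE)$. By the standard characterization of the generator of a symmetric Dirichlet form via its form (\cite[Chap.~I]{MR92}), this means $u\in\D(L)$ and $Lu=\sum_{n=1}^\infty L_n u$ as claimed.

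The main obstacle I expect is the careful justification of the two interchanges of infinite summation with integration. The first (in $\EE(u,v)=\sum_n\EE^n(u,v)$) should follow cleanly from pulling back to $T_0$ via $\Psi$ and applying the Parseval identity $\|\nabla(u\circ\Psi)(\phi)\|_{T_0}^2=\sum_n|\partial_{\phi_n}(u\circ\Psi)(\phi)|^2$ together with dominated convergence using the uniform bound $\|\nabla(u\circ\Psi)\|_\infty\le\|Du\|_\infty<\infty$ from Lemma \ref{lem:chain}. The second (moving $\sum_n L_n u$ inside the final integral) is precisely the hypothesis that the series converges in $L^2(\scr P_2,N_G)$, so it reduces to continuity of $w\mapsto N_G(vw)$ on $L^2$; the subtle point is only that the partial sums $\sum_{n\le N}\EE^n(u,v)$ must be shown to converge to $\EE(u,v)$, which is again the first interchange. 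So the whole argument hinges on that one dominated-convergence step, and everything else is a direct assembly of Lemma \ref{lem:PI}, the Remark preceding the corollary, and the abstract form–generator correspondence.
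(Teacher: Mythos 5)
Your proposal is correct and follows essentially the route the paper intends: the paper only sketches the argument ("partial integration formula of Lemma \ref{lem:PI} plus a straight-forward approximation"), and your detailed version — expanding $\EE(u,v)=\sum_n\EE^n(u,v)$ via Parseval pulled back through $\Psi$, integrating by parts in each component to get $-\int v\,L_nu\,\d N_G$, passing to the limit using the $L^2$-convergence hypothesis, and concluding via the form–generator correspondence on the core $C_b^1(\scr P_2)$ — fills in exactly that sketch. The only cosmetic difference is that you invoke Lemma \ref{lem:PI}(i) with the roles of the two functions swapped instead of (ii) with a single-term vector field, which is equivalent.
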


\section*{Acknowledgements}
\noindent Funding by Deutsche Forschungsgemeinschaft (DFG, German Research Foundation) – Project-ID 317210226 – SFB 1283 is gratefully acknowledged.

\end{document}